\newcommand{\HH}{H_{\mathfrak m}}
\newcommand{\fm}{\mathfrak m}
\newcommand{\Z}{\mathbb{Z}}
\newcommand{\R}{\mathbb{R}}
\newcommand{\N}{\mathbb{N}}
\newcommand{\ffi}{\varphi}
\newcommand{\be}{\beta}
\newcommand{\bl}{\boldsymbol{\ell}}
\DeclareMathOperator{\coker}{coker}
\DeclareMathOperator{\lk}{lk_{\Delta} }
\DeclareMathOperator{\st}{st_{\Delta} }
\DeclareMathOperator{\Soc}{Soc}
\DeclareMathOperator{\pnt}{\raise 0.5mm \hbox{\large\bf.}}
\newcommand{\s}{\; | \;}
\newcommand{\mif}{\mbox{if} ~}
\newcommand \pp[1] {^{\langle #1 \rangle}}
\newtheorem{theorem}{Theorem}[section]
\newtheorem{lemma}[theorem]{Lemma}
\newtheorem{proposition}[theorem]{Proposition}
\newtheorem{corollary}[theorem]{Corollary}
\newtheorem{lem-def}[theorem]{Lemma and Definition}
\newtheorem{prop-def}[theorem]{Proposition and Definition}
\theoremstyle{definition}
\newtheorem{remark}[theorem]{Remark}
\newtheorem{rem-def}[theorem]{Remark and Definition}
\newtheorem{notation}[theorem]{Notation}
\newtheorem*{acknowledgement}{Acknowledgement}
\title{Level algebras through Buchsbaum* manifolds}
\author[Uwe Nagel]{Uwe Nagel${}^{*}$}
\address{Department of Mathematics, University of Kentucky,
715 Patterson Office Tower, Lexington, KY 40506-0027, USA}
\email{uwenagel@ms.uky.edu}
\thanks{${}^*$ Part of the work for this paper was done while the
author was sponsored by the National Security Agency under Grant
Number H98230-09-1-0032.}
\begin{document}

\begin{abstract}
Stanley-Reisner rings of Buchsbaum* complexes are studied  by means
of their quotients modulo a linear system of parameters. The socle
of these quotients is computed. Extending a recent result by Novik and Swartz
for orientable homology manifolds without boundary, it is shown that
modulo a part of their socle these quotients are level algebras.
This provides new restrictions on the face vectors of Buchsbaum*
complexes.
\end{abstract}

%\subjclass{}
%\keywords{}

\maketitle

%\tableofcontents

%------------------------------------------------------------------------
%
%
%
%------------------------------------------------------------------------
\section{Introduction}

This note is inspired by work by Novik and Swartz \cite{NS-gor} and
by  Athanasiadis and Welker \cite{AW}. Its goal is to generalize
some results of \cite{NS-gor} and to contribute to the fruitful
interaction of algebraic, combinatorial, and topological methods in
order to study simplicial complexes.

Let $\Delta$ be a finite simplicial complex. Important algebraic
properties of it like Cohen-Macaulayness or Buchsbaumness are
defined by means of its Stanley-Reisner ring $K[\Delta]$. However,
these properties turn out to be topological properties in the sense
that  they depend only on the homeomorphism type of the geometric
realization $|\Delta|$ of $\Delta$. For example, all triangulations
of a manifold (with or without boundary) are Buchsbaum, and all
triangulations of a sphere or a ball are Cohen-Macaulay over each
field.

In \cite{AW}, Athanasiadis and Welker introduced Buchsbaum*
complexes  as the $(d-1)$-dimensional Buchsbaum complexes over a
field $K$ such that
\[
\dim_K \widetilde{H}_{d-2} (|\Delta| - p; K) = \dim_K \widetilde{H}_{d-2}(|\Delta|; K)
\]
for every $p \in |\Delta|$, where $\widetilde{H}_j (|\Delta|, K)$
denotes the reduced singular homology of the geometric realization
$|\Delta|$. The class of Buchsbaum* complexes includes all doubly
Cohen-Macaulay complexes and all triangulations of orientable
homology manifolds without boundary (see \cite{AW}).

In \cite{NS-Bu} Novik and Swartz pioneered the study of Buchsbaum
complexes by investigating socles of  artinian reductions of their
Stanley-Reisner rings. Our first main result is an improvement of
their description of the socle (\cite{NS-Bu}, Theorem 2.2) for
Buchsbaum* complexes. In fact, it provides a numerical
characterization of such complexes.

\begin{theorem}
  \label{thm:socle-Bu*}
Let $\Delta$ be a $(d-1)$-dimensional Buchsbaum simplicial complex.
Then $\Delta$ is a Buchsbaum* complex if and only if, for each
linear system of parameters $\bl$ of $K[\Delta]$ and each positive
integer $j$,
\[
\dim_K [\Soc K[\Delta]/\bl]_j = \binom{d}{j} \be_{j-1} (\Delta),
\]
where $\be_j (\Delta) := \dim_K \tilde{H}_j (\Delta; K)$ is the
$j$-th reduced Betti number of $\Delta$.
\end{theorem}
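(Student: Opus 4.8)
The plan is to derive the theorem from the socle description of Novik--Swartz together with the homological characterization of Buchsbaum* complexes.

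\emph{Reduction.} Fix an l.s.o.p.\ $\bl=\ell_1,\dots,\ell_d$ of $R:=K[\Delta]$ and set $A=R/\bl R$. Since $\Delta$ is Buchsbaum, its proper links are Cohen--Macaulay, so by Hochster's formula each $H^i_{\fm}(R)$ with $i<d$ is concentrated in degree $0$, with $\dim_K H^i_{\fm}(R)=\be_{i-1}(\Delta)$; moreover $A_j=0$ for $j>d$ and the Hilbert function of $A$ is independent of $\bl$ (Schenzel). By \cite{NS-Bu}, Theorem 2.2, for every $j\ge1$ the socle $[\Soc A]_j$ contains a distinguished subspace of dimension $\binom dj\be_{j-1}(\Delta)$, namely the image of $H^j_{\fm}(R)$ under the comparison maps coming from the Koszul complex on $\bl$ (the factor $\binom dj$ records the Koszul slots, and the degree is shifted from $0$ to $j$). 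Hence
\[
\dim_K[\Soc A]_j\ \ge\ \binom dj\be_{j-1}(\Delta)\qquad(j\ge1),
\]
and the theorem is equivalent to the statement that equality holds for every $j$ and every $\bl$ precisely when $\Delta$ is Buchsbaum*.

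\emph{From Buchsbaum* to equality.} I would use the following reformulation of the Buchsbaum* property, which can be extracted from \cite{AW} (or derived directly from the long exact sequence of the pair $(|\Delta|,\,|\Delta|-p)$, using that proper links of $\Delta$ are Cohen--Macaulay): $\Delta$ is Buchsbaum* if and only if, for every nonempty face $\sigma$, the inclusion-induced map $\widetilde H_{d-1}(\Delta;K)\to\widetilde H_{d-1-|\sigma|}(\mathrm{lk}_\Delta\sigma;K)$ onto the top reduced homology of the link is surjective; indeed $\dim_K\widetilde H_{d-2}(|\Delta|-p;K)=\be_{d-2}(\Delta)+\dim_K\coker$ of this map when $p$ lies in the relative interior of $\sigma$. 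Granting surjectivity for all $\sigma$, one runs the exact sequences that build $A$ from $R$ by successively dividing out $\ell_1,\dots,\ell_d$: at each step the pertinent colon submodule is a piece of $H^0_{\fm}$ by Buchsbaumness, and the connecting homomorphisms are controlled by the local cohomology of $R$ and of its localizations $R_{x_v}$, whose deviations from the Cohen--Macaulay case are exactly the cokernels above. Their vanishing leaves no socle beyond the distinguished subspace, and a Hilbert-function count (Schenzel's formula for the Buchsbaum quotient $A$) upgrades this to the asserted equalities, for every $j$ and every $\bl$.

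\emph{From equality to Buchsbaum*, and the obstacle.} Conversely, if $\Delta$ is not Buchsbaum* then the surjection above fails for some face $\sigma$, and the plan is to transport a nonzero class of its cokernel --- through the same Koszul/local-cohomology machinery --- to an element of $\Soc A$ outside the distinguished subspace, whence $\dim_K[\Soc A]_j>\binom dj\be_{j-1}(\Delta)$ for some $j$ and (by the $\bl$-independence of the Hilbert function of $A$) for every l.s.o.p. The main obstacle is precisely this step: \cite{NS-Bu}, Theorem 2.2 isolates the canonical part of the socle, but identifying its complement inside the Artinian Buchsbaum reduction --- and checking that the extra socle classes are genuinely new rather than reabsorbed into the canonical part --- requires delicate bookkeeping of gradings through the iterated colon-ideal computations. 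One must also keep in mind that the extra socle need not lie in degree $d-1$: for several facets glued along a common ridge it already appears in degree $1$, so the argument has to range over all $j$ at once, with the Cohen--Macaulayness of proper links (a consequence of the Buchsbaum hypothesis) being what makes the bookkeeping and the final count close up.
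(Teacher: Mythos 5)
Your proposal shares the same starting point as the paper (Novik--Swartz, Theorem~2.2) but diverges after that, and it contains a genuine gap in the ``only if'' direction which you yourself flag as ``the main obstacle.'' The paper's route is considerably cleaner and, more importantly, it closes both implications. The missing key idea is not an elaborate diagram chase but a degree bookkeeping on $\HH^d$: the paper first refines the Novik--Swartz decomposition (Proposition~\ref{prop:include-top-degree}) to show that, for any Buchsbaum complex, the residual piece $Q$ of the socle always contains the degree-zero submodule $[\HH^d(K[\Delta])]_0\,S$, and that the remaining part $Q'\subset\Soc\HH^d(K[\Delta])$ is supported purely in negative internal degrees. Thus the socle formula of Theorem~\ref{thm:socle-Bu*} holds (for all $\bl$) exactly when $Q'$ vanishes, i.e.\ when $\Soc\HH^d(K[\Delta])$ has no negative-degree part. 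The paper then invokes Athanasiadis--Welker, Proposition~2.8, which characterizes Buchsbaum* by precisely that condition: $\Soc\HH^d(K[\Delta])$ is concentrated in degree~$0$. Combined with Hochster's formula to convert $\dim_K[\HH^j]_0$ into $\be_{j-1}(\Delta)$, this yields the equivalence with no ad hoc chasing.

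Concretely, there are two problems with your version. First, your ``from Buchsbaum* to equality'' argument replaces the AW socle criterion by a surjectivity criterion on maps $\widetilde H_{d-1}(\Delta)\to\widetilde H_{d-1-|\sigma|}(\lk\sigma)$ and then gestures at ``running the exact sequences'' to conclude that the extra socle vanishes; without pinning down \emph{where} in $\HH^d$ that extra socle lives, the step ``their vanishing leaves no socle beyond the distinguished subspace'' is not a proof. It is precisely the content of Proposition~\ref{prop:include-top-degree} that makes this precise: the extra socle can only come from $[\Soc\HH^d]_{<0}$. Second, the converse is left open in your proposal; with the Proposition~\ref{prop:include-top-degree} refinement in hand, the converse follows at once, because a surplus in $[\Soc K[\Delta]/\bl]_j$ for some $j<d$ forces a nonzero negative-degree socle element of $\HH^d(K[\Delta])$, and AW~Proposition~2.8 (read as an if-and-only-if) then says $\Delta$ is not Buchsbaum*. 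Also note a small imprecision: the $j=d$ contribution $\binom{d}{d}\be_{d-1}(\Delta)$ is not supplied by NS Theorem~2.2 alone (there $Q$ is only \emph{some} submodule of $\Soc\HH^d$); you need the identification of $[\HH^d]_0\,S$ inside $Q$, which is again Proposition~\ref{prop:include-top-degree}.
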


This extends \cite{NS-gor}, Theorem 1.3 because each triangulation
of an orientable  $K$-homology manifold without boundary is a
Buchsbaum* complex. Similarly, the following result extends
\cite{NS-gor}, Theorem 1.4.

\begin{theorem}
  \label{thm:intro-level}
Let $\Delta$ be a $(d-1)$-dimensional Buchsbaum* complex, and let
$\bl := \ell_1,\ldots,\ell_d$ be a linear system of parameters for
$K[\Delta]$. Set $I := \bigoplus_{j = 1}^{d-1} [\Soc
K[\Delta]/\bl]_j$. Then $(K[\Delta]/\bl)/I$ is a level ring of
Cohen-Macaulay type $\beta_{d-1} (\Delta)$ and socle degree $d$,
that is, the socle of $(K[\Delta]/\bl)/I$ is a $K$-vector space of
dimension $\beta_{d-1} (\Delta)$ that is concentrated in degree $d$.
\end{theorem}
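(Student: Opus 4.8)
The plan is to pass to the graded Matlis dual and recast levelness of $A/I$ as a partial Poincar\'e-duality statement for the artinian ring $A:=K[\Delta]/\bl$, this statement being forced by Theorem~\ref{thm:socle-Bu*}. Write $\fm$ for the irrelevant ideal of $A$. Since $\dim\Delta=d-1$ we have $A_j=0$ for $j>d$, so $A_d=[\Soc A]_d$, and Theorem~\ref{thm:socle-Bu*} gives $\dim_K A_d=\binom{d}{d}\be_{d-1}(\Delta)=\be_{d-1}(\Delta)$ (which is $\geq 1$, since in a Buchsbaum* complex every facet lies in a $(d-1)$-cycle). Because $I_d=0$ we get $[\Soc(A/I)]_d=[A/I]_d=A_d/I_d$, of dimension $\be_{d-1}(\Delta)$; hence it remains exactly to show that $[\Soc(A/I)]_j=0$ for $j<d$, i.e.\ that $A/I$ is level of socle degree $d$, and then its Cohen--Macaulay type is automatically $\dim_K A_d=\be_{d-1}(\Delta)$.

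Put $A^{\vee}:=\Hom_K(A,K)$, graded so that it lives in degrees $-d,\dots,0$. By Matlis duality $A^{\vee}/\fm A^{\vee}\cong(\Soc A)^{\vee}$, so Theorem~\ref{thm:socle-Bu*} says $A^{\vee}$ is minimally generated by $\binom{d}{j}\be_{j-1}(\Delta)$ elements in degree $-j$, for $1\le j\le d$. Dualizing $0\to I\to A\to A/I\to 0$ identifies $(A/I)^{\vee}$ with the annihilator of $I$ in $A^{\vee}$, and since $I_d=0$ one has $[(A/I)^{\vee}]_{-d}=[A^{\vee}]_{-d}$; moreover $I\subseteq\Soc A$ forces $I\cdot[A^{\vee}]_{-d}=0$, so $M:=A\cdot[A^{\vee}]_{-d}\subseteq(A/I)^{\vee}$. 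Thus $A/I$ is level of socle degree $d$ exactly when $M=(A/I)^{\vee}$, i.e.\ when $(A/I)^{\vee}$ is generated in its bottom degree $-d$. Dualizing $M\hookrightarrow A^{\vee}$ gives a surjection $A=(A^{\vee})^{\vee}\twoheadrightarrow M^{\vee}$ with kernel $M^{\perp}$, where $M^{\perp}_e=\{a\in A_e : A_{d-e}\cdot a=0\}$ (the perp taken in $A$ under the evaluation pairing $A\times A^{\vee}\to K$); hence $A/I$ is level of socle degree $d$ if and only if $I=M^{\perp}$. The identities $I_e=M^{\perp}_e$ are trivial for $e\le 0$ and for $e\ge d$, and for $e=d-1$ they are the definition of the socle; so the whole theorem comes down to
$$[\Soc A]_e\;=\;\{\,a\in A_e : A_{d-e}\cdot a=0\,\}\qquad(1\le e\le d-2).$$
Equivalently, the multiplication pairing $A_e\times A_{d-e}\to A_d$ has left kernel precisely $[\Soc A]_e$; equivalently still, the iterated socle $(0:_A\fm^{\,d-e})$ agrees with the ordinary socle $(0:_A\fm)$ in degree $e$. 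The inclusion $\supseteq$ is automatic; the reverse inclusion is the heart of the matter, and the main obstacle.

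To prove it I would use the structure of the Buchsbaum face ring together with Theorem~\ref{thm:socle-Bu*}. For $\Delta$ Buchsbaum of dimension $d-1$ the modules $\HH^{i}(K[\Delta])$ with $i<d$ are finite-dimensional $K$-vector spaces concentrated in internal degree $0$, of dimension $\be_{i-1}(\Delta)$, and $\HH^{d}(K[\Delta])^{\vee}$ is the canonical module $\omega_{K[\Delta]}$; running the Koszul complex on $\bl$ turns this into explicit descriptions of $A$, of its iterated socles, and of the pairings $A_e\otimes A_{d-e}\to A_d$ in terms of $\HH^{<d}(K[\Delta])$ and $\omega_{K[\Delta]}$ (cf.\ \cite{NS-Bu}). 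In these terms the left kernel of the pairing above carries, beyond $[\Soc A]_e$, an additional contribution supported by the Koszul homology of $\bl$ on $K[\Delta]$, which for a \emph{general} Buchsbaum complex can be nonzero in low degrees; the crux is to show that it vanishes for $1\le e\le d-2$. This is where the Buchsbaum* hypothesis is essential, and the lever is Theorem~\ref{thm:socle-Bu*}, which fixes $\dim_K[\Soc(K[\Delta]/\bl')]_e$ for \emph{every} linear system of parameters $\bl'$: a nonzero extra contribution in some degree $e\le d-2$ is incompatible with this uniform socle count — one sees it directly for $e=1$ by absorbing the offending degree-one element into $\ell_d$ to produce a linear system of parameters whose degree-$1$ socle is strictly too large, and the higher degrees should follow by an analogous but more delicate perturbation of $\bl$. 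Granting the displayed equality, the reduction of the previous paragraph completes the proof; when $\Delta$ triangulates an orientable $K$-homology manifold without boundary that equality is exactly Poincar\'e duality, so the argument specializes to Theorem~1.4 of \cite{NS-gor}.
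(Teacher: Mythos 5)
Your reduction is clean and correct as far as it goes: setting $A:=K[\Delta]/\bl$, you correctly observe that $[A/I]_d=A_d$ has dimension $\be_{d-1}(\Delta)$ by Theorem~\ref{thm:socle-Bu*}, that the whole theorem is equivalent to the vanishing $[\Soc(A/I)]_e=0$ for $1\le e\le d-2$, and, via Matlis duality, that this is equivalent to the partial Poincar\'e-duality statement
\[
[\Soc A]_e \;=\; \{\,a\in A_e : A_{d-e}\cdot a = 0\,\}, \qquad 1\le e\le d-2.
\]
That reformulation is genuinely illuminating (and you are right that in the orientable manifold case it recovers Novik--Swartz). But the proposal then stops exactly where the proof has to begin. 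The displayed equality is the entire content of the theorem, and the argument you offer for it --- ``absorb the offending degree-one element into $\ell_d$ to produce an l.s.o.p.\ with too large a socle'' in degree $1$, and ``analogous but more delicate perturbation'' in higher degrees --- is not a proof. It is unclear that the perturbed sequence remains a linear system of parameters, it is unclear how the socle of the new artinian reduction relates to the iterated socle of the old one, and no mechanism is given by which Theorem~\ref{thm:socle-Bu*} for the perturbed l.s.o.p.\ would detect the offending element. In particular, the claim is not established even for $e=1$, and the higher degrees are explicitly deferred. As written, this is a restatement of the problem rather than a solution.

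The paper takes a different route that avoids the duality reformulation entirely. It proves a technical exact-sequence lemma (Lemma~\ref{lem:link-del-seq}): for a vertex $k$ whose link is 2-CM, passing modulo an l.s.o.p.\ preserves the star--deletion sequence, giving
\[
0 \to (K[\st k]/\bl)(-1) \stackrel{x_k}{\longrightarrow} K[\Delta]/\bl \to K[\Delta_{-k}]/\bl \to 0.
\]
Given $z\in A$ of degree $j\le d-2$ with $\fm z\subseteq I\subseteq\Soc A$, one pulls $x_k z$ back to an element $y$ of $K[\st k]/\bl$ which must lie in the socle there; since $\st k$ is a cone over a 2-CM $(d-2)$-complex, that socle is concentrated in degree $d-1>j$, so $y=0$ and hence $x_k z=0$. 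Running over all vertices gives $\fm z=0$, i.e.\ $z\in\Soc A$, which is exactly the vanishing you need. The essential leverage here is Miyazaki's theorem that links of vertices in a 2-Buchsbaum (hence Buchsbaum*) complex are 2-CM, plus the concrete short exact sequence --- neither of which appears in your outline. To complete your approach you would need to supply a proof of the displayed duality statement; if you want to stay inside your framework, the paper's exact-sequence argument is in fact one way to do it, but it is a substantial piece of work, not a routine perturbation.
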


Being a level ring provides strong restrictions on the Hilbert
function of $(K[\Delta]/\bl)/I$. However, the classification of
Hilbert functions of level algebras is a wide open problem, and the
above result lends further motivation to studying it.

This note is organized as follows. In Section \ref{sec-socles} we
study the socle of the artinian reduction of a Buchsbaum complex. We
observe that these complexes can in fact be characterized by their
socle (Corollary \ref{cor:Buchsb-comples-chara}). Moreover, we
slightly improve the description of this socle given in
\cite{NS-Bu}, Theorem 2.2, by identifying one more piece. Combined
with a result in \cite{AW}, this implies Theorem
\ref{thm:socle-Bu*}.

Section \ref{sec:level} is devoted to the proof of  Theorem
\ref{thm:intro-level}. It allows us to establish results on the
enumeration of the faces of Buchsbaum* complexes (Theorem
\ref{thm-upper-bound-h-Buchs*}) that improve the corresponding
results for arbitrary Buchsbaum complexes in \cite{NS-Bu}. This is
carried out in Section \ref{sec-enumeration}.

%-------------------------------
%
%
%----------------------------------

\section{Socles of artinian reductions}
\label{sec-socles}

Throughout this note $S := K[x_1,\ldots,x_n]$ denotes the polynomial
ring in $n$ variables over a field $K$.

Let $M = \oplus_{j\in \Z} [M]_j$ be a finitely generated, graded
$S$-module. Its $i$-th local cohomology module with support in the
maximal ideal $\fm :=(x_1,\ldots,x_n)$ is  denoted by $\HH^i (M)$
(see, e.g., \cite{BH-book} and \cite{ST96}. The {\em socle} of $M$
is the submodule
\[
\Soc M := 0 :_M \fm := \{y \in M \s \fm y = 0\}.
\]
The module $M (i)$ is the module with the same structure as $M$, but with a shifted grading defined by $[M (i)]_j := [M]_{i +j}$. Furthermore we use $s M$ to denote the direct sum of $s$ copies of $M$. If $M$ has (Krull) dimension $d$, a sequence $\ell_1,\ldots,\ell_d \in S$ of linear forms is called a {\em linear system of parameters} of $M$ if $M/\bl := M/(\ell_1,\ldots,\ell_d) M$ has dimension zero. In this case $M/\bl$ is called an {\em artinian reduction} of $M$.

Assume now that $M$ is a Buchsbaum module. For a comprehensive introduction to the theory of Buchsbaum modules we refer to \cite{St-V}. Here we only need the following facts about Buchsbaum modules:
\begin{itemize}
  \item For all $i \neq \dim M$, $\fm \HH^i (M) = 0$;

  \item If $\ell \in S$ is a linear parameter of $M$, that is $\dim M/ \ell M = \dim M -1$, then also $M/\ell M$ is Buchsbaum and the kernel of the multiplication map $M(-1) \stackrel{\ell}{\longrightarrow} M$   has Krull dimension zero.
\end{itemize}
Thus, the long exact cohomology sequence induced by the multiplication map splits into short exact sequences
\[
0 \to \HH^i (M) (-1) \to \HH^i (M/\ell M) \to H^{i+1} (M) \to 0 \quad \text{if }\; i+1 < \dim M =:d
\]
and ends with
\[
0 \to \HH^{d-1} (M) (-1) \to \HH^{d-1} (M/\ell M) \to \HH^d(M) (-1) \to \HH^d (M) \to 0.
\]
Using the first sequence repeatedly one obtains, for every part $\ell_1,\ldots,\ell_j$ of a linear system of parameters of $M$, the isomorphism of graded modules
\[
\HH^{i} (M/(\ell_1,\ldots,\ell_j)M) \cong \bigoplus_{k = 0}^{j} \binom{j}{k} \HH^{i + k} (M)(-k) \quad \text{if }\; i < d - j.
\]
In the case $i = 0$ and $j = d$, Novik and Swartz \cite{NS-Bu} established the following result on the socle of the module on the left-hand side.

\begin{theorem}[\cite{NS-Bu}, Theorem 2.2]
  \label{thm:socle-Bu}
Let $M$ be a finitely generated graded Buchsbaum $S$-module of
dimension $d$, and let $\bl := \ell_1,\ldots,\ell_d$ be a linear
system of parameters of $M$. Then
\[
\Soc M/\bl \cong \left ( \bigoplus_{j = 0}^{d-1} \binom{d}{j} \HH^j (M) (-j) \right )
\oplus Q (-d),
\]
where $Q$ is a graded submodule of $\Soc \HH^d (M)$.
\end{theorem}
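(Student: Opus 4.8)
The plan is to argue by induction on $d=\dim M$. The base case $d=0$ is immediate: then $M/\bl=M=\HH^0(M)$ is already artinian, so $\Soc(M/\bl)=\Soc\HH^0(M)$ and the assertion holds with $Q:=\Soc\HH^0(M)$, the sum over $j$ being empty.

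For $d\ge 1$ I would put $M':=M/\ell_1 M$. By the Buchsbaum facts recalled above, $M'$ is Buchsbaum of dimension $d-1$, $\bl':=\ell_2,\dots,\ell_d$ is a linear system of parameters of $M'$, and $M'/\bl'=M/\bl$; hence the inductive hypothesis gives
\[
\Soc(M/\bl)\;\cong\;\Bigl(\bigoplus_{j=0}^{d-2}\binom{d-1}{j}\HH^j(M')(-j)\Bigr)\oplus Q'(-(d-1)),\qquad Q'\subseteq\Soc\HH^{d-1}(M').
\]
The next step is to rewrite the $\HH^j(M')$ in terms of $\HH^\bullet(M)$. For $0\le j\le d-2$ one has $j<d-1$, so the displayed iterated isomorphism (taken for the single parameter $\ell_1$) yields $\HH^j(M')\cong\HH^j(M)\oplus\HH^{j+1}(M)(-1)$; and the four-term (``ends with'') sequence for $M$ and $\ell_1$ produces an extension $0\to\HH^{d-1}(M)\to\HH^{d-1}(M')\to V\to 0$ with $V:=\Ker\bigl(\HH^d(M)(-1)\stackrel{\ell_1}{\longrightarrow}\HH^d(M)\bigr)$, whence --- applying $\Hom_S(K,-)$ and using that $\ell_1\in\fm$ annihilates $\Soc\HH^d(M)$, so that $(\Soc\HH^d(M))(-1)\subseteq V$ and thus $\Soc V=(\Soc\HH^d(M))(-1)$, and the resulting sequence of $K$-vector spaces splits --- one obtains $\Soc\HH^{d-1}(M')\cong\HH^{d-1}(M)\oplus R(-1)$ with $R\subseteq\Soc\HH^d(M)$. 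Substituting the first isomorphism into the display and collecting coefficients with Pascal's rule $\binom{d-1}{j}+\binom{d-1}{j-1}=\binom{d}{j}$ converts $\bigoplus_{j=0}^{d-2}\binom{d-1}{j}\HH^j(M')(-j)$ into $\bigoplus_{j=0}^{d-2}\binom{d}{j}\HH^j(M)(-j)\oplus(d-1)\HH^{d-1}(M)(-(d-1))$. Since $\binom{d}{d-1}=d$ and every short exact sequence of graded $K$-vector spaces splits, the theorem will follow once one shows $Q'\cong\HH^{d-1}(M)\oplus Q(-1)$ for some graded submodule $Q\subseteq\Soc\HH^d(M)$; and, by the decomposition $\Soc\HH^{d-1}(M')\cong\HH^{d-1}(M)\oplus R(-1)$ just obtained, this amounts to showing that $Q'$ contains the canonical image of $\HH^{d-1}(M)$ inside $\Soc\HH^{d-1}(M')$.

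This containment is the crux, and it is precisely where the bare inductive statement is too weak: the inductive hypothesis has to be strengthened so as to pin down $Q$ --- for instance, by identifying $Q(-d)$ as the image of a distinguished $\fm$-annihilated submodule of $\Soc(M/\bl)$ under the natural map $M/\bl\to\HH^d(M)(-d)$ obtained by composing the connecting homomorphisms of the successive parameter quotients $M/(\ell_1,\dots,\ell_i)M$. Concretely, writing $N:=M/(\ell_1,\dots,\ell_{d-1})M$ --- a one-dimensional Buchsbaum module with $\HH^0(N)\cong\bigoplus_{k=0}^{d-1}\binom{d-1}{k}\HH^k(M)(-k)$ by the iterated isomorphism, and with $M/\bl=N/\ell_d N$ --- the four-term sequence for $N$ gives $0\to\HH^0(N)\to M/\bl\to W\to 0$ with $W=\Ker\bigl(\HH^1(N)(-1)\stackrel{\ell_d}{\longrightarrow}\HH^1(N)\bigr)$ and $\Soc W=(\Soc\HH^1(N))(-1)$, and the substance of the matter is that the connecting homomorphism $\Soc W\to\Ext^1_S(K,\HH^0(N))$ vanishes on those summands of $\Soc\HH^1(N)$ that come from $\HH^j(M)$ with $j<d$, so that the corresponding socle classes of $W$ lift to $\Soc(M/\bl)$ and the multiplicities rise from $\binom{d-1}{j}$ to $\binom{d}{j}$. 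Establishing this lifting --- equivalently, that $\fm$ already annihilates the relevant preimages in $M/\bl$ --- is the main obstacle; everything else is bookkeeping with the cohomology sequences collected above and with binomial coefficients, together with the two elementary facts that a short exact sequence of graded $K$-vector spaces splits and that an $\fm$-annihilated submodule automatically lies in the socle.
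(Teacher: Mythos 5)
This theorem is quoted in the paper from \cite{NS-Bu}, Theorem 2.2; no proof of it appears here, so there is no internal argument to compare against, and your attempt must be judged on its own. The bookkeeping in your attempt is correct: the Buchsbaum splittings $\HH^j(M')\cong\HH^j(M)\oplus\HH^{j+1}(M)(-1)$ for $j\le d-2$, the four-term sequence and the identification $\Soc V=(\Soc\HH^d(M))(-1)$, the passage through $\Hom_S(K,-)$, and the Pascal-rule recombination all check out, and you have correctly reduced the theorem to a single remaining claim, namely that the submodule $Q'\subseteq\Soc\HH^{d-1}(M')$ furnished by the inductive hypothesis for $M'=M/\ell_1M$ contains the canonical image of $\HH^{d-1}(M)$.

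That remaining claim is, however, a genuine gap, and you concede as much. The inductive hypothesis applied to $M'$ only produces an abstract, non-canonical isomorphism with an \emph{unconstrained} summand $Q'\subseteq\Soc\HH^{d-1}(M')$; it carries no information about where the image of the inclusion $\HH^{d-1}(M)\hookrightarrow\HH^{d-1}(M')$ lands relative to $Q'$, and in fact the containment you need is not even well posed until the isomorphism is rigidified. Your proposed repair --- strengthen the induction by identifying $Q(-d)$ as the image of a distinguished $\fm$-annihilated submodule of $M/\bl$ under the iterated connecting homomorphism to $\HH^d(M)(-d)$, and then show that the connecting map $\Soc W\to\Ext^1_S(K,\HH^0(N))$ vanishes on the summands of $\Soc\HH^1(N)$ coming from $\HH^j(M)$ with $j<d$ --- is the right shape of strengthening, but proving that vanishing is precisely the substance of the theorem, and you do not carry it out. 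As written, what you have is a correct reduction of the statement to an unverified lifting claim, not a proof; for the actual argument one must go to \cite{NS-Bu}.
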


In this section we make two comments about this result in the case
the module $M$ is a Stanley-Reisner ring.

A simplicial complex $\Delta$ on $n$ vertices is a collection of
subsets of $[n] := \{1,\ldots,n\}$ that is closed under inclusion.
Its Stanley-Reisner ideal is
\[
I_{\Delta} := (x_{j_1} \cdots x_{j_k} \s \{j_1 < \cdots < j_k\}
\notin \Delta) \subset S,
\]
and its Stanley-Reisner ring is $K[\Delta] := S/I_{\Delta}$. For
each subset $F \subset [n]$, the {\em link of $F$} is the subcomplex
\[
\lk F := \{G \in \Delta : F \cup G \in \Delta, F \cap G = \emptyset \}.
\]
Note that $\lk \emptyset = \Delta$.

The complex $\Delta$ is a Cohen-Macaulay or a Buchsbaum complex
over $K$ if $K[\Delta]$ has the corresponding  property.
Alternatively, a combinatorial-topological characterization of
Cohen-Macaulay complexes is due to Reisner \cite{Reisner}. Schenzel
extended his result in \cite{Sch}. A simplicial complex $\Delta$
 is Buchsbaum (over $K$) if and only if $\Delta$ is
pure and the link of each vertex of $\Delta$ is Cohen-Macaulay (over
$K$).

Our first observation about Theorem \ref{thm:socle-Bu} states  that
its description of the socle in fact characterizes Buchsbaum
simplicial complexes.

\begin{corollary}
  \label{cor:Buchsb-comples-chara}
Let $\Delta$ be a simplicial complex of dimension $d-1$, and let
$\bl := \ell_1,\ldots,\ell_d$ be a linear system of parameters of
its Stanley-Reisner ring $K[\Delta]$. Then $\Delta$ is Buchsbaum if
and only if
\[
\Soc K[\Delta]/\bl \cong \left ( \bigoplus_{j = 1}^{d-1} \binom{d}{j} \HH^j (K[\Delta]) (-j) \right )
\oplus Q (-d),
\]
where $Q$ is a graded submodule of $\Soc \HH^d (K[\Delta])$.
\end{corollary}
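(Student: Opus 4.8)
The plan is to prove both implications. The forward direction is essentially immediate: if $\Delta$ is Buchsbaum, then $K[\Delta]$ is a Buchsbaum $S$-module, and Theorem \ref{thm:socle-Bu} applies directly, giving the asserted decomposition of $\Soc K[\Delta]/\bl$ — noting that $\HH^0(K[\Delta]) = 0$ since $K[\Delta]$ is a domain-like reduced ring of positive dimension (more precisely $K[\Delta]$ has positive depth, as $\Delta$ is nonempty and the irrelevant ideal is not associated), so the summand for $j = 0$ drops out and the direct sum starts at $j = 1$.

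For the converse, suppose the socle of $K[\Delta]/\bl$ has the stated form. First I would recall Schenzel's combinatorial-topological criterion quoted in the excerpt: $\Delta$ is Buchsbaum over $K$ if and only if $\Delta$ is pure and the link of each vertex is Cohen-Macaulay over $K$. So the goal is to extract purity and Cohen-Macaulayness of vertex links from the socle hypothesis. The key tool is the Reisner/Schenzel description of the local cohomology $\HH^i(K[\Delta])$ in terms of the reduced homology of links: $\HH^i(K[\Delta])$ is a $\Z^n$-graded module whose graded pieces are controlled by $\tilde H_{i-1}(\lk F; K)$ for faces $F$, and $K[\Delta]$ is Buchsbaum precisely when all these modules with $i < d$ are concentrated in $\Z^n$-degree $0$ (equivalently are $\fm$-torsion, i.e. killed by $\fm$) and the module structure degenerates accordingly. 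The strategy is that the hypothesis on $\Soc K[\Delta]/\bl$ forces exactly this concentration.

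Concretely, I would proceed by induction on $d$. For the base cases small $d$ one checks directly: when $d = 1$, purity says $\Delta$ is a set of isolated vertices plus the empty face, and the hypothesis (the sum over $1 \le j \le d-1$ is empty, so $\Soc K[\Delta]/\bl \cong Q(-d)$ with $Q \subseteq \Soc \HH^1$) is automatic, while Buchsbaumness of a $0$-dimensional complex is automatic too. For the inductive step, I would use a generic linear form $\ell = \ell_1$ and compare $K[\Delta]/\ell$ with $K[\Delta]$. A generic hyperplane section of a Stanley-Reisner ring is not itself a Stanley-Reisner ring, so the cleanest route is the algebraic one of Novik–Swartz \cite{NS-Bu}: analyze how the socle behaves under passing from $M = K[\Delta]$ to $M/\ell$. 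The point is that the short exact sequences of local cohomology that Theorem \ref{thm:socle-Bu} rests on are available for \emph{any} graded module, not just Buchsbaum ones, and one can read off from the shape of $\Soc M/\bl$ that the low local cohomology modules $\HH^i(M)$, $i < d$, must be annihilated by $\fm$ — because any non-socle element or any element in the "wrong" internal degree of $\HH^i(M)$ would, after quotienting by a further generic linear form, contribute to the socle in a way incompatible with the given binomial-coefficient bookkeeping. Once $\fm \HH^i(K[\Delta]) = 0$ for all $i < d$, a theorem of Schenzel (the characterization of Buchsbaum modules via finite local cohomology in the relevant range, together with the "surjective" condition) yields that $K[\Delta]$ is Buchsbaum.

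The main obstacle, and the step I expect to require the most care, is the inductive bookkeeping in the converse: making rigorous the claim that the prescribed form of $\Soc K[\Delta]/\bl$ — with precisely the coefficients $\binom{d}{j}$ and precisely the internal shifts $(-j)$ — forces $\fm\HH^i(K[\Delta]) = 0$ for $i < d$. One cannot simply quote Theorem \ref{thm:socle-Bu} in reverse, since that theorem assumes Buchsbaumness. Instead I would set up the exact sequences
\[
0 \to \HH^i(M)(-1) \to \HH^i(M/\ell M) \to \Ker(\cdot \ell \text{ on } \HH^{i+1}(M)) \to 0,
\]
valid for a generic linear form $\ell$ and any $i$, and track the contribution to $\Soc(M/\bl)$ arising from each $\HH^i(M)$, arguing that if some $\HH^i(M)$ with $i<d$ failed to be $\fm$-torsion (or failed to be finite-dimensional), the dimension of some graded socle piece $[\Soc K[\Delta]/\bl]_j$ would exceed $\binom{d}{j}\dim_K[\HH^j(K[\Delta])]_0$, contradicting the hypothesis; and a careful spectral-sequence-free chase shows this contribution is exactly $\binom{d}{j}$-fold only in the Buchsbaum case. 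Controlling these contributions over all internal degrees simultaneously — rather than just in the single degree $0$ that the homology of links lives in for Buchsbaum complexes — is the delicate point, and it is where the combinatorial input (that $\HH^i(K[\Delta])$ is already $\Z^n$-graded with pieces indexed by links, hence its degree-$0$ part in the $\Z$-grading is the relevant finite-dimensional space) has to be married with the algebraic socle computation.
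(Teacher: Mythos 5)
Your forward direction is fine, and the observation explaining why the $j=0$ summand of Theorem~\ref{thm:socle-Bu} drops out (because $\HH^0(K[\Delta])=0$ for a nonempty complex) is a useful detail the paper leaves implicit. However, your converse is much more elaborate than it needs to be, and this points to a missed idea rather than just an inefficiency: you worry at length about re-deriving the $\binom{d}{j}$-bookkeeping by induction on $d$, tracking contributions through long exact sequences, and arguing by contradiction against excess socle dimension, when in fact none of this is needed.

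The observation you are missing is that the hypothesis already hands you everything directly. Since $\bl$ is a linear system of parameters, $K[\Delta]/\bl$ is artinian, so $\Soc K[\Delta]/\bl$ is a finite-dimensional $K$-vector space killed by $\fm$. The assumed isomorphism exhibits each $\HH^j(K[\Delta])$ (for $1\le j\le d-1$), up to shift, as a direct summand of this socle; hence each such $\HH^j(K[\Delta])$ is itself finitely generated and annihilated by $\fm$, i.e., a finite-dimensional $K$-vector space. No induction, no exact sequences, and no contradiction argument about the precise binomial coefficients is required; you do not need the multiplicities or shifts at all for this step. From there the endgame you sketch is the right one: for Stanley--Reisner rings, finite-dimensionality of $\HH^j(K[\Delta])$ for all $j<d$ is equivalent to $\Delta$ being Buchsbaum, by Schenzel's criterion (and this is the citation the paper actually uses). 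Your proposal correctly identifies Schenzel as the target but misdiagnoses the route to it as requiring the delicate inductive analysis; it does not, because the reduction to "finite-dimensional local cohomology in low degrees" is immediate from artinianness of $K[\Delta]/\bl$.
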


\begin{proof}
By Theorem \ref{thm:socle-Bu}, it is enough to show sufficiency. The
formula for the socle implies that, for each $j \in
\{1,\ldots,d-1\}$, the module $\HH^j (K[\Delta])$ is annihilated by
$\fm$ and finitely generated, hence it is a finite-dimensional
$K$-vector space. Therefore $\Delta$ must be a Buchbaum complex
(see, e.g., \cite{Sch} or \cite{NS-sing}).
\end{proof}

Our second observation identifies a piece of the module $Q$
occurring  in Theorem \ref{thm:socle-Bu}. In its proof we use
\[
e (P) := \sup \{j \in \Z \s [P]_j \neq 0\}
\]
to denote the end of a graded module $P$. Note that $e (N) =
-\infty$  if $N$ is trivial.

\begin{proposition}
  \label{prop:include-top-degree}
Let $\Delta$ be a Buchsbaum simplicial complex of dimension $d-1$, and let $\bl := \ell_1,\ldots,\ell_d$ be a linear system of parameters of  $K[\Delta]$. Then
\[
\Soc K[\Delta]/\bl \cong \left ( \bigoplus_{j = 1}^{d-1} \binom{d}{j} \HH^j (K[\Delta]) (-j) \right )
\oplus ([\HH^d (K[\Delta])]_0 S) (-d)  \oplus Q' (-d),
\]
where $[\HH^d (K[\Delta])]_0 S$ is the submodule of\ $\HH^d(K[\Delta])$ generated by its elements of degree zero and where $Q'$ is a graded submodule of\; $\Soc \HH^d (K[\Delta])$ that vanishes in all non-negative degrees.
\end{proposition}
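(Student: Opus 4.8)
The plan is to refine the decomposition in Theorem~\ref{thm:socle-Bu} by pinning down where the degree-zero piece of $\HH^d(K[\Delta])$ must appear in $\Soc K[\Delta]/\bl$. Write $R := K[\Delta]$, $d := \dim R$, and recall from Theorem~\ref{thm:socle-Bu} that $\Soc R/\bl \cong \big(\bigoplus_{j=1}^{d-1}\binom{d}{j}\HH^j(R)(-j)\big) \oplus Q(-d)$ with $Q \subseteq \Soc\HH^d(R)$ a graded submodule; note that the summand indexed by $j=0$ vanishes because $\HH^0(R) = 0$ for a simplicial complex (there are no nilpotents, equivalently $R$ has positive depth). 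It remains to show that $Q$ contains the submodule $[\HH^d(R)]_0 S$ of $\HH^d(R)$ generated in degree zero. The natural way to access this is to peel off the linear forms one at a time and track degree~$0$ through the long exact cohomology sequences recorded in the excerpt.

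The key computation is the following. Since $R$ is Buchsbaum and $[R]_1$ is spanned by $x_1,\dots,x_n$, standard Stanley--Reisner theory gives $\HH^d(R)_j = 0$ for $j > 0$ when $\Delta$ is, say, connected in the relevant sense; more to the point, I would argue that the map
\[
\HH^d(R)(-1) \xrightarrow{\ \ell_d\ } \HH^d(R)
\]
is zero in degree $\le 0$ precisely because $\fm$ acts on $\HH^j(R)$ trivially only for $j \ne d$, but in degree zero $[\HH^d(R)(-1)]_1 \to [\HH^d(R)]_1$ lands in a piece that one controls. Actually the cleaner route is iterative: using the short exact sequences $0 \to \HH^{d-1}(R')(-1) \to \HH^{d-1}(R'/\ell R') \to \HH^d(R')(-1) \to \HH^d(R') \to 0$ for successive artinian reductions $R'$, together with the fact that $\HH^{d-1}$ of a Buchsbaum module is $\fm$-torsion, I would show by induction on the number of parameters that the image of $[\HH^d(R)]_0$ under the iterated connecting maps survives into $\Soc R/\bl$ and lands in degree $d$. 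Concretely, after killing $\ell_1,\dots,\ell_{d-1}$ one is left with a one-dimensional Buchsbaum module $M' := R/(\ell_1,\dots,\ell_{d-1})R$ whose $\HH^1(M')$ contains, in each degree, a copy built out of $\HH^d(R)$ in lower cohomological-plus-twist degree; then the four-term sequence for $\ell_d$ shows $[\Soc M'/\ell_d M']_d \supseteq [\HH^1(M')]_{d-1} \twoheadleftarrow [\HH^d(R)]_0$, and this inclusion is the claimed $[\HH^d(R)]_0 S$ summand (one checks the module structure, i.e.\ that multiplication by $\fm$ carries $[\HH^d(R)]_0$ into this same copy, using that $\fm$ does \emph{not} annihilate $\HH^d$). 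The leftover $Q' := Q / [\HH^d(R)]_0 S$ is then automatically a graded submodule of $\Soc \HH^d(R)$, and $Q'$ vanishes in non-negative degrees because $e(\HH^d(R)) \ge 0$ already accounts for degree $\ge 0$ via the $[\HH^d(R)]_0 S$ term while $[\HH^d(R)]_j = 0$ for $j \ge 1$ is not guaranteed in general — so more carefully, $Q' $ vanishes in non-negative degrees by construction: any degree-$\ge 0$ socle element of $\HH^d(R)$ appearing in $Q$ lies in the subspace generated by $[\HH^d(R)]_0$.

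The main obstacle I anticipate is bookkeeping the $S$-module structure precisely enough: it is easy to see \emph{vector-space} inclusions degree by degree, but one must verify that the copy of $[\HH^d(R)]_0 S$ sitting inside $\Soc R/\bl$ is closed under the $S$-action and is isomorphic as a graded module to $[\HH^d(R)]_0 S$ — this requires chasing the connecting homomorphisms as maps of $S$-modules, not just of graded vector spaces, and using that the $\binom{d}{k}$-fold direct sum decompositions $\HH^i(M/(\ell_1,\dots,\ell_j)M) \cong \bigoplus_k \binom{j}{k}\HH^{i+k}(M)(-k)$ are $S$-linear. A secondary subtlety is making sure the degree-zero part genuinely injects, i.e.\ that the relevant connecting map $\HH^{d-1}(M'/\ell_d M') \to \HH^d(M')(-1)$ is surjective in the degrees that matter; this follows from the four-term exact sequence once one knows the final map $\HH^d(M')(-1) \xrightarrow{\ell_d} \HH^d(M')$ vanishes in degree $1$, which in turn I would get from $[\HH^d(M')]_1 = 0$ or, failing that, from a direct argument with the graded structure of $\HH^d$ for a one-dimensional Buchsbaum Stanley--Reisner-type module.
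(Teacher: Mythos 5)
Your proposal shares the paper's general strategy (peel off the linear forms one at a time, track the long exact cohomology sequences, and at the end identify $[Q]_0$ with $[\HH^d(K[\Delta])]_0$), but the execution leaves real gaps that the paper's proof closes cleanly with two inputs you only gesture at.

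First, you hedge on whether $\HH^d(K[\Delta])$ vanishes in positive degrees, suggesting it holds ``when $\Delta$ is, say, connected in the relevant sense.'' In fact Hochster's formula gives this for \emph{every} simplicial complex of dimension $d-1$ (all nonzero $\mathbb{Z}^n$-graded components of local cohomology sit in nonpositive degrees), and it is indispensable here in two places: (a) it gives $\fm\cdot[\HH^d]_0 \subseteq [\HH^d]_{\geq 1} = 0$, so $[\HH^d]_0 \subseteq \Soc\HH^d$ and $[\HH^d]_0 S = [\HH^d]_0$ is trivially an $\fm$-torsion module --- this dissolves the ``$S$-module structure bookkeeping'' worry you raise at the end, which the paper never needs to confront; and (b) it is exactly what shows the leftover piece $Q'$ vanishes in all non-negative degrees, since $[Q']_j \subseteq [\HH^d]_j = 0$ for $j>0$ and $[Q']_0 = 0$ by construction. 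Without stating this vanishing, your final paragraph about $Q'$ does not close.

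Second, your ``iterative'' chase through the long exact sequences is not pinned down to a provable invariant. The paper's induction tracks two things for $\bl_j = \ell_1,\dots,\ell_j$: the bound $e(\HH^{d-j}(K[\Delta]/\bl_j)) \leq j$, and the dimension equality $\dim_K[\HH^{d-j}(K[\Delta]/\bl_j)]_j = \dim_K[\HH^d(K[\Delta])]_0$. The inductive step is then purely degree-by-degree: in the long exact sequence coming from multiplication by $\ell_{j+1}$, the module $\HH^{d-j-1}(K[\Delta]/\bl_j) \cong \bigoplus_i \binom{j}{i}\HH^{d-j-1+i}(K[\Delta])(-i)$ vanishes above degree $j$ (because all the intermediate $\HH^i$ are concentrated in degree $0$, again by Hochster), which forces the bound and the equality of top-degree dimensions to propagate. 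At $j=d$ this says $\dim_K [K[\Delta]/\bl]_d = \dim_K[\HH^d]_0$, and since everything in top degree is socle and the $\binom{d}{j}\HH^j(-j)$ summands contribute nothing in degree $d$, one gets $[Q]_0 = [\HH^d]_0$ by a dimension count. Your four-term-sequence discussion, talk of $\HH^1(M')$ ``containing copies built out of $\HH^d(R)$,'' and claims like ``the map $\HH^d(R)(-1)\xrightarrow{\ell_d}\HH^d(R)$ is zero in degree $\leq 0$'' (which is not what you need, and is not obviously true in negative degrees) do not amount to this argument. You should state and prove the precise inductive invariant; the rest then falls out by counting dimensions, not by chasing $S$-module maps.
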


\begin{proof}
Hochster's formula (see \cite{ST96}, Theorem 4.1)  provides that all
intermediate cohomology modules  $\HH^0 (K[\Delta]),\ldots,\HH^{d-1}
(K[\Delta])$ are concentrated in degree zero and that $\HH^d
(K[\Delta])$ vanishes in all positive degrees.

Set $\bl_j := \ell_1,\ldots,\ell_j$. Since  $\bl_d = \bl$ and
$\HH^{0} (K[\Delta]/\bl_d) \cong  K[\Delta]/\bl_d$, our claim
follows, once we have shown that,   for all $j \in \{0,\ldots,d\}$,
\[
e(\HH^{d-j} (K[\Delta]/\bl_j)) \le j \quad \text{and} \quad  \dim_K [\HH^{d-j} (K[\Delta]/\bl_j)]_j = \dim_K \HH^{d} (K[\Delta]).
\]

Indeed, if $j = 0$, this is true. Let $j < d$. Multiplication  by
$l_{j+1}$ on $K[\Delta]/\bl_j$ induces the long exact cohomology
sequence
\[
0 \to \HH^{d-j-1} (K[\Delta]/\bl_j) \to \HH^{d-j-1} (K[\Delta]/\bl_{j+1}) \to \HH^{d-j} (K[\Delta]/\bl_j) (-1) \to \HH^{d-j} (K[\Delta]/\bl_j).
% \to 0.
\]
Since  $\HH^{d-j-1} (K[\Delta]/\bl_j) \cong \bigoplus_{i = 0}^{j}
\binom{j}{i} \HH^{d-j-1+i} (K[\Delta])(-i)$ vanishes in each degree
$k > j$, we get $e(\HH^{d-j-1} (K[\Delta]/\bl_{j+1})) \le j+1$ and
\[
\dim_K [\HH^{d-j-1} (K[\Delta]/\bl_{j+1})]_{j+1} =
\dim_K [\HH^{d-j} (K[\Delta]/\bl_j)]_j,
\]
as required.
\end{proof}

\begin{remark}
  \label{rem:contribution-zero-part}
The above argument shows that for an arbitrary, not necessarily Buchsbaum, simplicial complex $\Delta$,
the module $[\HH^d (K[\Delta])]_0 S$  contributes to the socle of $K[\Delta]/\bl$. This also follows from Lemma 2.3 in \cite{BN}.
\end{remark}

We now identify an instance where the module $Q'$ appearing in the previous result vanishes.

\begin{corollary}
  \label{cor:socle-reduction-b*}
Let $\Delta$ be a Buchsbaum* simplicial complex of dimension $d-1$, and let $\bl := \ell_1,\ldots,\ell_d$ be a linear system of parameters of  $K[\Delta]$. Then
\[
\Soc K[\Delta]/\bl \cong \left ( \bigoplus_{j = 1}^{d-1} \binom{d}{j} \HH^j (K[\Delta]) (-j) \right )
\oplus ([\HH^d (K[\Delta])]_0 S) (-d).
\]
\end{corollary}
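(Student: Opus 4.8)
The plan is to combine Proposition~\ref{prop:include-top-degree} with the characterization of Buchsbaum* complexes due to Athanasiadis and Welker. By Proposition~\ref{prop:include-top-degree} we already have the asserted socle decomposition up to one extra summand $Q'(-d)$, with $Q'$ a graded submodule of $\Soc\HH^d(K[\Delta])$ vanishing in all non-negative degrees, so everything comes down to showing $Q'=0$. First I would locate the degrees in which $Q'$ can live. Since a simplex is not Buchsbaum*, we have $K[\Delta]/\bl\neq K$, so the socle of the standard graded artinian algebra $K[\Delta]/\bl$ is concentrated in positive degrees; as $Q'(-d)$ is a submodule of that socle, $Q'$ vanishes also in all degrees $\le -d$, hence $Q'$ is concentrated in internal degrees $-(d-1),\dots,-1$. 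Because $\HH^{j}(K[\Delta])$ is concentrated in degree $0$ with $\dim_K\HH^j(K[\Delta])=\be_{j-1}(\Delta)$ for $1\le j\le d-1$ (Hochster's formula), comparing graded dimensions in the decomposition reduces the claim to the numerical identity $\dim_K[\Soc K[\Delta]/\bl]_j=\binom{d}{j}\be_{j-1}(\Delta)$ for $1\le j\le d-1$.

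The key input is the description of Buchsbaum* complexes in \cite{AW}: a $(d-1)$-dimensional Buchsbaum complex $\Delta$ over $K$ is Buchsbaum* over $K$ if and only if, for every vertex $v$, one has $\dim_K\widetilde H_{d-2}(\Delta\setminus v;K)=\be_{d-2}(\Delta)$, where $\Delta\setminus v$ is the subcomplex of faces of $\Delta$ avoiding $v$ --- equivalently, the inclusion $\Delta\setminus v\hookrightarrow\Delta$ induces an injection on reduced homology, or again the connecting map $\widetilde H_{d-1}(\Delta;K)\to\widetilde H_{d-2}(\lk v;K)$ in the Mayer--Vietoris sequence of $\Delta=\st v\cup(\Delta\setminus v)$ is surjective. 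I would then translate this using Hochster's formula for the $\Z$-graded pieces of $\HH^d(K[\Delta])$ in terms of cohomology of links (see \cite{ST96}): one has $[\HH^d(K[\Delta])]_{-1}\cong\bigoplus_v\widetilde H^{d-2}(\lk v;K)$, multiplication by $x_v$ carries the $v$-th summand into $[\HH^d(K[\Delta])]_0\cong\widetilde H^{d-1}(\Delta;K)$ via the map dual to the connecting homomorphism above (while $x_w$ kills that summand for $w\neq v$), and therefore the Buchsbaum* condition is equivalent to the vanishing of $[\Soc\HH^d(K[\Delta])]_{-1}\cong\bigoplus_v\ker\bigl(\widetilde H^{d-2}(\lk v;K)\to\widetilde H^{d-1}(\Delta;K)\bigr)$.

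To finish I would feed this back into the iterated long exact cohomology sequences underlying Theorem~\ref{thm:socle-Bu} and the proof of Proposition~\ref{prop:include-top-degree}. There the modules $\HH^{d-j}(K[\Delta]/\bl_j)$ were controlled only in their top internal degree $j$; in the lower degrees the submodule $Q'$ is built from the $\fm$-torsion in the kernels of the multiplications by $\ell_{j+1}$ on these top local cohomology modules, and unwinding the construction one sees that these kernels are fed --- degree by degree --- by $\fm$-torsion of $\HH^d(K[\Delta])$ in the negative degrees $-1,\dots,-(d-1)$, whose analysis is governed by the maps just described for $\Delta$ and, after restriction to links of faces (which are Cohen--Macaulay, hence amenable to the same analysis), for those links. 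The Buchsbaum* hypothesis makes all of this vanish, so $Q'=0$. The hard part will be precisely this bookkeeping: isolating $Q'$ inside the exact sequences, and showing that the Buchsbaum* condition --- a statement a priori only about the vertices of $\Delta$ --- propagates so as to kill the relevant $\fm$-torsion in every internal degree between $-(d-1)$ and $-1$ at once. A convenient way to package it is a descending induction on $j$ refining the bound $e(\HH^{d-j}(K[\Delta]/\bl_j))\le j$ of Proposition~\ref{prop:include-top-degree}, showing that for a Buchsbaum* complex $\HH^{d-j}(K[\Delta]/\bl_j)$ agrees in internal degrees below $j$ with $\bigoplus_{k=0}^{j}\binom{j}{k}\HH^{d-j+k}(K[\Delta])(-k)$, with no extra summand; the case $j=d$ then gives the statement. (Alternatively, the same conclusion would follow at once from Proposition~\ref{prop:include-top-degree} were one to know the Cohen--Macaulay type of $K[\Delta]/\bl$ to be $\sum_{j=1}^{d}\binom{d}{j}\be_{j-1}(\Delta)$ for Buchsbaum* complexes.)
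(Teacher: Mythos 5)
Your first step is exactly right: Proposition~\ref{prop:include-top-degree} reduces the claim to showing $Q'=0$, and since $Q'$ is a graded submodule of $\Soc\HH^d(K[\Delta])$ vanishing in all non-negative degrees, it suffices to know that $\Soc\HH^d(K[\Delta])$ vanishes in all negative degrees. But this is precisely where your proposal stalls. The paper's proof is a one-liner at this point: it cites Proposition~2.8 of \cite{AW}, which says that for a Buchsbaum* complex the socle of $\HH^d(K[\Delta])$ is concentrated in degree zero. Combined with Proposition~\ref{prop:include-top-degree}, this kills $Q'$ immediately.

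Your proposal does not have this result available and instead tries to reconstruct it from the combinatorial definition. The translation you set up via Hochster's formula --- relating the vertex-deletion/Mayer--Vietoris characterization of Buchsbaum* to $[\Soc\HH^d(K[\Delta])]_{-1}$ --- only controls internal degree $-1$ of the socle. Since $\Soc\HH^d(K[\Delta])$ is killed by $\fm$, it is merely a graded $K$-vector space, so there is no module-theoretic lever to propagate vanishing from degree $-1$ down to degrees $-2,\ldots,-(d-1)$. You acknowledge this (``the hard part will be precisely this bookkeeping''), and indeed the descending induction you sketch, refining the end-degree bound $e(\HH^{d-j}(K[\Delta]/\bl_j))\le j$ degree by degree, is not completed; it is not clear that the vertex-level Buchsbaum* hypothesis alone, fed into the long exact sequences, forces the vanishing of the lower-degree $\fm$-torsion of $\HH^d$ without establishing precisely what \cite{AW}, Prop.~2.8 asserts. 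In short, the proposal identifies the right reduction but is missing the key citation and does not supply a valid substitute; as written it proves only that $[Q']_{-1}=0$.
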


\begin{proof}
According to \cite{AW}, Proposition 2.8, the socle of $\HH^d (K[\Delta]$ is concentrated in degree zero. Hence Proposition \ref{prop:include-top-degree} gives
the claim.
\end{proof}

The last result {\em proves Theorem \ref{thm:socle-Bu*}}  because
$\HH^j (K[\Delta]) = [\HH^j (K[\Delta])]_0$ if $ j \neq d$ and
$\dim_K [\HH^j (K[\Delta])]_0 = \be_{j-1} (\Delta)$ for all $j$ by
Hochster's formula.

%-------------------------------
%
%
%----------------------------------

\section{Level quotients}
\label{sec:level}

The goal of this section is to establish Theorem \ref{thm:intro-level}. Recall that an artinian graded $K$-algebra $A$ is a {\em level} ring if its socle is concentrated in one degree, that is, $[\Soc A]_j = 0$ if $j \neq e(A)$.

Let $\Delta$ be a simplicial complex on $[n]$. Recall  that each
subset $F \subseteq [n]$ induces the  following simplicial
subcomplexes of $\Delta$: the {\em star}
\[
\st F := \{G \in \Delta : F \cup G \in \Delta\},
\]
and the {\em deletion}
$$
\Delta_{-F} := \{G \in \Delta : F \cap G = \emptyset  \}.
$$
If $F \notin \Delta$, then $\lk F = \st F = \emptyset$.

Consider any vertex $k \in [n]$. Then $\st k$ is the cone over the
link $\lk k$ with apex $k$. Hence its Stanley-Reisner ideal is
$I_{\st k} = I_{\Delta} : x_k$. Furthermore,  the Stanley-Reisner
ideal of the deletion $\Delta_{-k}$ considered as a complex on $[n]$
is $(x_k, J_{\Delta_{-k}}) = (x_k, I_{\Delta})$, where
$J_{\Delta_{-k}} \subset S$ is the extension ideal of the
Stanley-Reisner ideal of $\Delta_{-k}$ considered as a  complex on
$[n] \setminus \{k\}$. Thus, we get the short exact sequence.
\begin{equation} \label{eq:star-del-seq}
0 \to (K[\st k]) (- \deg x_k) \overset{x_k}{\longrightarrow} K[\Delta] \to K[\Delta_{-k}] \to 0.
\end{equation}

As preparation, we need the following technical result.

\begin{lemma}
  \label{lem:link-del-seq}
Let $\Delta$ be a $(d-1)$-dimensional Buchsbaum complex, and let $k$ be a vertex of $\Delta$ such that $\lk k$ is 2-CM and $\Delta{_k}$ is Buchsbaum of dimension $d-1$. Then,  for every linear system of parameters $\bl := \ell_1,\ldots,\ell_d$ of
$K[\Delta]$, there is an exact sequence of graded modules
\[
0 \to (K[\st k]/\bl) (-1) \to K[\Delta]/\bl \to K[\Delta_{-k}]/\bl \to 0,
\]
where the first map is induced by multiplication by $x_k$.
\end{lemma}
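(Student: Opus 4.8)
The plan is to start from the short exact sequence \eqref{eq:star-del-seq},
\[
0 \to K[\st k](-1) \overset{x_k}{\longrightarrow} K[\Delta] \to K[\Delta_{-k}] \to 0,
\]
and to show that it stays exact after reducing modulo the linear system of parameters $\bl$. Reducing a short exact sequence of modules modulo $\bl$ is the same as applying $-\otimes_S S/(\ell_1,\ldots,\ell_d)$, so the only thing that can go wrong is the appearance of $\Tor_1^S$-terms, equivalently the failure of right-exactness to be exactness on the left. Concretely, tensoring the sequence with $S/\bl$ yields a long exact sequence whose relevant tail is
\[
\Tor_1^S(K[\Delta_{-k}], S/\bl) \to (K[\st k]/\bl)(-1) \overset{x_k}{\longrightarrow} K[\Delta]/\bl \to K[\Delta_{-k}]/\bl \to 0,
\]
so it suffices to prove that the connecting map out of that $\Tor_1$ is zero, and the cleanest way to do that is to show the $\Tor_1$ term itself vanishes.

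First I would reduce the problem to a statement about $\bl$ being a regular sequence \emph{on the relevant graded pieces}. Since $\Delta$ is $(d-1)$-dimensional Buchsbaum and $\bl$ is a linear system of parameters, $\bl$ is a "weak" regular sequence on $K[\Delta]$ in the Buchsbaum sense; the hypotheses say $\lk k$ is 2-CM (so $K[\st k]$, the cone over $\lk k$, is Cohen--Macaulay of dimension $d$ — coning preserves Cohen--Macaulayness and raises dimension by one) and $\Delta_{-k}$ is Buchsbaum of dimension $d-1$. On a Cohen--Macaulay module of dimension $d$ any linear system of parameters is a genuine regular sequence, so $\bl$ is a regular sequence on $K[\st k]$. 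I would then compute $\Tor$ by means of the Koszul complex $K_\bullet(\bl)$: because $\bl$ is $K[\st k]$-regular, $K_\bullet(\bl)\otimes K[\st k]$ is a resolution, and the key vanishing $\Tor_{\ge 1}^S\!\big(K[\st k], S/\bl'\big)=0$ holds for every partial sequence $\bl' = \ell_1,\dots,\ell_j$. For the Buchsbaum modules $K[\Delta]$ and $K[\Delta_{-k}]$ one does not get vanishing of all higher Tor, but one does get control: the homology of the Koszul complex is governed by the (finite-dimensional) local cohomology modules, via the standard spectral sequence / the iterated short exact sequences already displayed in Section~\ref{sec-socles}. In particular $\Tor_1^S(K[\Delta_{-k}], S/\bl)$ sits in a filtration whose subquotients are built from $\HH^i(K[\Delta_{-k}])$.

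The heart of the argument is then a degree/support bookkeeping on the snake-lemma connecting homomorphism
\[
\delta\colon \Tor_1^S(K[\Delta_{-k}], S/\bl)\longrightarrow (K[\st k]/\bl)(-1).
\]
I would argue that $\delta$ must be zero by comparing where the two sides live. The target $(K[\st k]/\bl)(-1)$ is the artinian reduction of a $d$-dimensional Cohen--Macaulay ring, shifted by one; one knows its socle is concentrated in degree $d+1$ (socle degree of a CM artinian reduction of a $(d-1)$-dimensional CM complex is $d$, plus the shift), and more to the point it has \emph{no} submodule supported in the low degrees where the $\Tor_1$ of a Buchsbaum module lives. Indeed, the iterated sequences from Section~\ref{sec-socles} show that $\Tor_1^S(K[\Delta_{-k}], S/\bl)$ is a finite-dimensional space built from the $\HH^i(K[\Delta_{-k}])$, $i<d$, each of which is concentrated in degree $0$ by Hochster's formula; tracking the Koszul shifts, this Tor is concentrated in a narrow range of low degrees. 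Since $K[\st k]$ is Cohen--Macaulay, $K[\st k]/\bl$ has no socle — hence no nonzero homomorphic image of a module killed by $\fm$ — below degree $d$; comparing, the image of $\delta$ must be annihilated by $\fm$ (it is a quotient of such a module) yet land in a torsion-free-in-low-degrees part of $(K[\st k]/\bl)(-1)$, forcing $\delta=0$. Equivalently, and perhaps more slickly: multiplication by $x_k$ on $K[\Delta]/\bl$ is injective on the submodule $K[\st k]/\bl$ because $x_k$ is a nonzerodivisor on $K[\st k]$ and $\bl$ is $K[\st k]$-regular, so the sequence already displayed is left-exact for that reason alone, and the only point needing the Buchsbaum hypothesis on $\Delta_{-k}$ is to know that $K[\Delta_{-k}]/\bl$ is the genuine cokernel, i.e.\ that no extra torsion creeps in from the failure of $\bl$ to be regular on $K[\Delta_{-k}]$.

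The main obstacle I anticipate is exactly this last subtlety: showing that the naive cokernel $K[\Delta]/(\bl, x_k K[\st k])$ is \emph{isomorphic} to $K[\Delta_{-k}]/\bl$ rather than merely surjecting onto it. The surjection is formal from \eqref{eq:star-del-seq}; injectivity on the other end is where the hypothesis "$\Delta_{-k}$ is Buchsbaum of dimension $d-1$" must be used, presumably to guarantee that $\bl$ restricts to a linear system of parameters of $K[\Delta_{-k}]$ and that the relevant $\Tor$-obstruction, which a priori lives in the intermediate cohomology $\HH^{<d}(K[\Delta_{-k}])$, actually maps to zero in $K[\st k]/\bl$ — for which the 2-CM hypothesis on $\lk k$ (equivalently, the Cohen--Macaulayness and hence socle-freeness in low degrees of $K[\st k]/\bl$) is precisely what is needed. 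I would organize the write-up so that the Cohen--Macaulay input kills the obstruction map by a degree argument, and the Buchsbaum input on $\Delta_{-k}$ guarantees the cokernel is correctly identified; assembling these gives the asserted exact sequence
\[
0 \to (K[\st k]/\bl)(-1)\to K[\Delta]/\bl \to K[\Delta_{-k}]/\bl\to 0.
\]
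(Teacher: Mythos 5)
Your plan to tensor \eqref{eq:star-del-seq} with $S/\bl$ in one shot and kill the connecting homomorphism out of $\Tor_1^S(K[\Delta_{-k}], S/\bl)$ is plausible in outline, but the key step — that the image of that connecting map is annihilated by $\fm$ and sits in low degrees — is asserted, not established, and this is where the argument has a real gap. The ``iterated sequences from Section~\ref{sec-socles}'' control \emph{local cohomology} $\HH^i(K[\Delta_{-k}]/\bl_j)$; they do not give a structure theorem for the Koszul homology $\Tor_1^S(K[\Delta_{-k}], S/\bl)$, and it is simply not a standard fact that $\fm$ annihilates $H_1(\bl;M)$ for a Buchsbaum $M$ without further work. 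You also mischaracterize the role of the 2-CM hypothesis: Cohen--Macaulayness alone does \emph{not} imply that the socle of $K[\st k]/\bl$ is concentrated in the top degree (your parenthetical ``equivalently'' is false); that is exactly what 2-CM of $\lk k$ buys you, via $\Soc\HH^d(K[\st k])$ being concentrated in degree $0$. (Also, the socle of $(K[\st k]/\bl)(-1)$ sits in degree $d$, not $d+1$, since $\dim\st k = d-1$ and $\st k$ is a cone, so $h_d(\st k)=0$.) Finally, the ``more slickly'' variant is incorrect: $x_k$ being a nonzerodivisor on $K[\st k]$, or $\bl$ being $K[\st k]$-regular, says nothing about injectivity of the induced map $(K[\st k]/\bl)(-1)\to K[\Delta]/\bl$; the obstruction lives precisely in $(\bl K[\Delta]\cap x_kK[\st k])/\bl x_kK[\st k]$, and that is the content of the lemma, not something that can be assumed.

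The paper avoids all of these issues by \emph{not} attempting a one-shot Tor computation. Instead it first shows, from the long exact local cohomology sequence of \eqref{eq:star-del-seq}, that $\HH^{d-1}(K[\Delta])\to\HH^{d-1}(K[\Delta_{-k}])$ is an isomorphism — the degree argument happens there, with $\coker\ffi$ landing in $\Soc\HH^d(K[\st k])(-1)$ in degree $0$, killed by 2-CM — and hence $\HH^i(K[\Delta])\cong\HH^i(K[\Delta_{-k}])$ for all $i\ne d$. Then it mods out one linear form at a time: for a Buchsbaum ring the kernel of multiplication by $\ell_{j+1}$ is \emph{exactly} $\HH^0$, so the two kernel columns in the snake diagram agree via the isomorphism of local cohomologies, and induction closes the argument. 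Modding out a single parameter keeps the bookkeeping elementary, which is precisely what your one-shot Tor approach would need a nontrivial input (a structure theorem for Koszul homology of Buchsbaum modules) to replicate.
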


\begin{proof}
For $j \in [d]$, set $\bl_j := \ell_1,\ldots,\ell_j$, so $\bl_d =
\bl$.  We will show that there is a short exact sequence
\begin{equation}
  \label{eq:induction-claim}
0 \to (K[\st k]/\bl_j) (-1) \to K[\Delta]/\bl_j \to K[\Delta_{-k}]/\bl_j \to 0.
\end{equation}

Since $\Delta$ is Buchsbaum, the long exact cohomology sequence
induced by Sequence \eqref{eq:star-del-seq} provides the exact
sequence
\begin{equation*}
  0 \to \HH^{d-1}(K[\Delta]) \stackrel{\ffi}{\longrightarrow} \HH^{d-1} (K[\Delta_{-k}]) \to \HH^d (K[\st k])(-1)
\end{equation*}
Using that the modules $\HH^{d-1}(K[\Delta])$ and
$\HH^{d-1}(K[\Delta_{-k}])$ are  concentrated in degree zero, we see that every
non-trivial element in $\coker \ffi$ gives a socle element of
$\HH^{d} (K[\st k])$. However, since $\lk k$ is 2-CM, the socle of
$\HH^{d} (K[\st k])(-1)$ is concentrated in degree $d >0$. We
conclude that $\ffi$ is an isomorphism.

Since $\st k$ is Cohen-Macaulay and has dimension $d-1$, it follows
that
\[
\HH^i (K[\Delta]) \cong \HH^i (K[\Delta_{-k}]) \quad
\text{for all} \; i \neq d.
\]

Using that $\Delta$ and $\Delta_{-k}$ are Buchsbaum this implies,  for
$j = 0,\ldots,d-1$,
\begin{equation}
  \label{eq:compare-local-coho}
\HH^0 (K[\Delta]/\bl_j) \cong \HH^0 (K[\Delta_{-k}]/\bl_j).
\end{equation}

We now show exactness  of Sequence \eqref{eq:induction-claim} by
induction.  Let $j \in \{0,\ldots,d-1\}$. Using the induction
hypothesis, multiplication by $\ell_{j+1}$ induces the following
commutative diagram
\begin{equation*}
\begin{CD}
&&&& 0 && 0 \\
&&&& @VVV @VVV  \\
&& 0 &&  0 :_{K[\Delta]/\bl_j} \ell_{j+1} &&  0 :_{K[\Delta_{-k}]/\bl_j} \ell_{j+1} \\
&& @VVV @VVV @VVV \\
0 @>>> (K[\st k]/\bl_j)(-1)  @>>> K[\Delta]/\bl_j  @>>> K[\Delta_{-k}]/\bl_j  @>>> 0 \\
&& @VV{\ell_{j+1}}V  @VV{\ell_{j+1}}V @VV{\ell_{j+1}}V  \\
0 @>>> K[\st k]/\bl_j  @>>> (K[\Delta]/\bl_j) (1) @>>> (K[\Delta_{-k}]/\bl_j)(1) @>>> 0 \\[1ex]
\end{CD}
\end{equation*}
Since $K[\Delta]/\bl_j$ and $K[\Delta_{-k}]/\bl_j$ are Buchsbaum rings, we conclude that
\[
0 :_{K[\Delta]/\bl_j} \ell_{j+1} \cong \HH^0 (K[\Delta]/\bl_j) \cong  \HH^0 (K[\Delta_{-k}]/\bl_j)
\cong 0 :_{K[\Delta_{-k}]/\bl_j} \ell_{j+1}.
\]
Hence, the Snake lemma provides the exact sequence
\[
0 \to (K[\st k]/\bl_{j+1}) (-1) \to K[\Delta]/\bl_{j+1} \to K[\Delta_{-k}]/\bl_{j+1} \to 0,
\]
as desired.
\end{proof}

Note that the above result remains true if one uses a  system of
parameters of arbitrary (positive) degrees. In the special case when $\Delta$ is an orientable homology manifold, Lemma \ref{lem:link-del-seq} essentially reduces to \cite{Swartz}, Proposition 4.24.

The following result is more general than Theorem
\ref{thm:intro-level}. However, to take full advantage of this
generality one needs an analogue  of Theorem \ref{thm:socle-Bu*} for
2-Buchsbaum complexes.

\begin{theorem}
  \label{thm:level}
Let $\Delta$ be a $(d-1)$-dimensional 2-Buchsbaum simplicial complex such that $\beta_{d-1} (\Delta) \neq 0$, and let
$\bl := \ell_1,\ldots,\ell_d$ be a linear system of parameters of
$K[\Delta]$. Set $I := \bigoplus_{j = 1}^{d-1} [\Soc
K[\Delta]/\bl]_j$. Then $(K[\Delta]/\bl)/I$ is a level ring of
Cohen-Macaulay type $\beta_{d-1} (\Delta)$.
\end{theorem}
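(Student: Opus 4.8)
The plan is to induct on $d$, using the star--deletion exact sequence from Lemma \ref{lem:link-del-seq} to reduce a $2$-Buchsbaum complex to a lower-dimensional situation while keeping careful track of socle degrees. First I would set up the base of the induction: when $d = 1$, $K[\Delta]/\bl$ is already a quotient of a polynomial ring in, effectively, one variable modulo a power, so its socle is automatically concentrated in the top degree, and $I = 0$ in that case since there are no degrees $j$ with $1 \le j \le d-1 = 0$; the level property and the statement about Cohen-Macaulay type $\beta_{d-1}(\Delta)$ then follow directly from Hochster's formula and the identification $[\HH^1(K[\Delta])]_0 S$ as the relevant socle piece from Corollary \ref{cor:socle-reduction-b*} (or rather its $2$-CM analogue built into the hypotheses). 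For the inductive step, I would fix a vertex $k$ of $\Delta$. Since $\Delta$ is $2$-Buchsbaum, $\lk k$ is $2$-CM and the deletion $\Delta_{-k}$ is again $2$-Buchsbaum of dimension $d-1$ (this is the standard inheritance of the $2$-Buchsbaum property under deletion of a vertex, and it is exactly what is needed to apply Lemma \ref{lem:link-del-seq}), so I obtain the short exact sequence
\[
0 \to (K[\st k]/\bl)(-1) \to K[\Delta]/\bl \to K[\Delta_{-k}]/\bl \to 0,
\]
with $K[\st k]/\bl$ an artinian reduction of the Cohen-Macaulay ring $K[\st k]$, hence a Gorenstein-free but in any case Cohen-Macaulay artinian algebra whose socle sits in degree $d$ (as $\st k$ has dimension $d-1$), shifted to degree $d+1$ in the sequence.

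The heart of the argument is then a socle computation. From the exact sequence I would extract the long exact sequence of the functor $\Hom_S(K, -) = 0 :_{-} \fm$, i.e.
\[
0 \to \Soc\big((K[\st k]/\bl)(-1)\big) \to \Soc(K[\Delta]/\bl) \to \Soc(K[\Delta_{-k}]/\bl) \to \Ext^1_S(K, (K[\st k]/\bl)(-1)) \to \cdots,
\]
but more usefully I would argue directly on graded pieces. In degrees $j$ with $1 \le j \le d-1$, Corollary \ref{cor:socle-reduction-b*} (applied in the $2$-Buchsbaum setting, where the top-cohomology socle is concentrated in degree zero) tells me that $[\Soc K[\Delta]/\bl]_j \cong \binom{d}{j}[\HH^j(K[\Delta])]_0$, and by the cohomology isomorphism $\HH^i(K[\Delta]) \cong \HH^i(K[\Delta_{-k}])$ for $i \ne d$ established inside the proof of Lemma \ref{lem:link-del-seq}, this matches, up to the binomial coefficients coming from killing parameters, the corresponding socle degrees of $K[\Delta_{-k}]/\bl$ together with the contribution of $\st k$. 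The point is that the submodule $I \subseteq K[\Delta]/\bl$ is precisely the ``lower'' socle, and modding out by it is compatible with the sequence: I would show that $I$ maps to the analogous lower socle $I_{-k}$ of $K[\Delta_{-k}]/\bl$ and that the preimage in $(K[\st k]/\bl)(-1)$ of $I$ is zero (its socle is only in degree $d+1 > d-1$), so that dividing the whole sequence by $I$ yields
\[
0 \to (K[\st k]/\bl)(-1) \to (K[\Delta]/\bl)/I \to (K[\Delta_{-k}]/\bl)/I_{-k} \to 0
\]
after checking the relevant Tor-vanishing in the low degrees that would otherwise obstruct exactness on the left (this is where the $2$-Buchsbaum hypothesis and the degree bounds from Proposition \ref{prop:include-top-degree} do the real work). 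By induction $(K[\Delta_{-k}]/\bl)/I_{-k}$ is level of socle degree $d$ with $\Soc$ of dimension $\beta_{d-1}(\Delta_{-k})$ — provided $\beta_{d-1}(\Delta_{-k}) \ne 0$; if it vanishes I would instead use a direct argument that $(K[\Delta]/\bl)/I$ reduces to the Cohen-Macaulay piece coming from $\st k$ alone. Since both $(K[\st k]/\bl)(-1)$ (socle in degree $d+1$? — no: its socle as a Cohen-Macaulay artinian reduction of a $(d-1)$-dimensional ring is in degree... I must be careful, $\st k$ is a cone so $K[\st k]/\bl$ is an artinian reduction of a $d$-dimensional... no, $\dim K[\st k] = d$, so one needs $d$ parameters and the socle is in degree $d-1$; shifted, degree $d$) has socle concentrated in degree $d$ and the quotient does too, a short exact sequence of modules each with socle in the single degree $d$ forces $(K[\Delta]/\bl)/I$ to have socle concentrated in degree $d$ as well — here I would invoke that an extension of level algebras with the same socle degree is level, which follows from the snake/long-exact-sequence bookkeeping once one knows the connecting map $\Soc(K[\Delta_{-k}]/\bl)/I_{-k} \to [\,(K[\st k]/\bl)(-1)\,]$ lands in a degree $\le d$, automatic here. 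Finally, counting dimensions of the degree-$d$ socle pieces through the sequence and using $\beta_{d-1}(\Delta) = \beta_{d-1}(\st k\text{-part}) + \beta_{d-1}(\Delta_{-k})$ gives the Cohen-Macaulay type.

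The main obstacle I anticipate is the exactness on the left of the $I$-quotiented sequence: dividing out submodules does not in general preserve the injectivity of the first map, and one must show that $(K[\st k]/\bl)(-1) \cap I = 0$ inside $K[\Delta]/\bl$ and that no element of $I_{-k}$ lifts to something whose product with $x_k$ re-enters $I$. This is really a statement that the copy of $K[\st k]/\bl$, whose socle lives only in degree $d$, meets the ``too-low'' socle $I$ trivially and that the induced map on quotients stays injective — plausible from the degree estimates ($I$ lives in degrees $\le d-1$, the image of $\st k$ contributes to socle only in degree $d$) but requiring that these degree constraints propagate correctly through the non-socle parts of the modules, which is exactly the kind of thing the Buchsbaum (indeed $2$-Buchsbaum) structure and Proposition \ref{prop:include-top-degree} are designed to control. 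A secondary subtlety is the correct $2$-Buchsbaum analogue of Corollary \ref{cor:socle-reduction-b*}: I would need that for a $2$-Buchsbaum complex $\Soc \HH^d(K[\Delta])$ is still concentrated in degree zero, which should follow from the corresponding result in \cite{AW} since $2$-CM links is precisely the extra input Buchsbaum* already uses — so the generality gap flagged after the statement is really about the enumeration corollaries, not about this proof.
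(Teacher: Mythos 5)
Your proposal takes a genuinely different route from the paper, and the route has a fundamental flaw: the induction on $d$ cannot get off the ground. In Lemma \ref{lem:link-del-seq} (and in its proof), the deletion $\Delta_{-k}$ is required to be Buchsbaum of dimension $d-1$ --- the \emph{same} dimension as $\Delta$. So the star--deletion sequence never reduces the dimension, and replacing $\Delta$ by $\Delta_{-k}$ does not advance an induction on $d$ in any way. The dimension drop you are implicitly hoping for lives in the link $\lk k$, which is $2$-CM of dimension $d-2$, but that is not the third term of the short exact sequence. As a result, the entire architecture of your argument --- ``apply the result to $\Delta_{-k}$, which is smaller'' --- does not apply. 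You would also need to justify a Betti-number decomposition like $\beta_{d-1}(\Delta)=\beta_{d-1}(\st k)+\beta_{d-1}(\Delta_{-k})$, which is false in general (the cone $\st k$ is acyclic, and $\beta_{d-1}(\Delta)=\beta_{d-1}(\Delta_{-k})$ does not hold either). Finally, you invoke Corollary \ref{cor:socle-reduction-b*}, but that result is proved only for Buchsbaum* complexes, whereas Theorem \ref{thm:level} is stated for $2$-Buchsbaum complexes; the paper explicitly flags that the $2$-Buchsbaum analogue of Theorem \ref{thm:socle-Bu*} is not available, and its proof of Theorem \ref{thm:level} carefully avoids relying on it.

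The paper's actual argument is more elementary and entirely non-inductive, and it sidesteps every obstacle you anticipated. It reduces at once to showing that if $z\in K[\Delta]/\bl$ has degree $j\le d-2$ and $\fm z\subseteq I\subseteq\Soc K[\Delta]/\bl$, then already $\fm z=0$. Fix a vertex $k$. In the sequence $0\to(K[\st k]/\bl)(-1)\xrightarrow{\ \ffi\ }K[\Delta]/\bl\to K[\Delta]/(\bl,x_k)\to 0$, the element $x_kz$ lies in the image of $\ffi$ (it dies in the rightmost quotient), so $x_kz=\ffi(y)$ for some $y$ of degree $j$. Since $x_kz\in\Soc K[\Delta]/\bl$ and $\ffi$ is injective, $y\in\Soc(K[\st k]/\bl)$. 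But $\lk k$ is $2$-CM of dimension $d-2$ and $\st k$ is the cone over it, so $\Soc(K[\st k]/\bl)$ is concentrated in degree $d-1>j$; hence $y=0$ and $x_kz=0$. Running over all vertices $k$ gives $\fm z=0$, i.e.\ $z\in\Soc K[\Delta]/\bl$, and the degree-$(d-1)$ case is immediate from the definition of $I$. There is no need to quotient the exact sequence by $I$, so the left-exactness worry you flagged never arises; the only socle-concentration input required is for $K[\st k]/\bl$, which comes directly from $2$-CM-ness of the link and needs no Buchsbaum* hypothesis. I would suggest scrapping the induction and recasting your argument as this single-vertex element chase.
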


\begin{proof}
Since we know that $\dim_K [K[\Delta]/\bl]_d = \beta_{d-1} (\Delta)$
is it enough to show that the socle of $(K[\Delta]/\bl)/I$ vanishes
in all degree $j < d$. By definition of $I$, this is true if $j =
d-1$. Let $z \in K[\Delta]/\bl$ be an element of degree $j \le d-2$
such that $\fm \cdot z \subseteq I \subseteq \Soc K[\Delta]/\bl$. We
have to show that $z$ is already in $\Soc K[\Delta]/\bl$.

To this end let $k \in [n]$ be any vertex of $\Delta$.  Since
$\Delta$ is 2-Buchsbaum, $\lk k$ is 2-CM by Miyazaki (\cite{Mi},
Lemma 4.2). Thus, we may apply Lemma \ref{lem:link-del-seq}. We
rewrite the exact sequence therein as
\[
0 \to (K[\st k]/\bl) (-1) \stackrel{\ffi}{\longrightarrow}
K[\Delta]/\bl \to K[\Delta]/(\bl, x_k) \to 0.
\]
This sequence implies that there is some $y \in K[\st k]/\bl$ of
degree $j$ such that $\ffi (y) = x_k \cdot z$. Since $x_k \cdot z$
is in $\Soc K[\Delta]/\bl$, we conclude that $y$ is in the socle of
$K[\st k]/\bl)$. However, since $\lk k$ is 2-CM of dimension $d-2$
and $\st k$ is the cone over $\lk k$, it follows that the socle of
$K[\st k]/\bl$ is concentrated in degree $d-1 > j = \deg y$. This
shows that $y = 0$, thus $x_k \cdot z = 0$. Since this is true for
every vertex $k$, we have shown $\fm \cdot z = 0$, as required.
\end{proof}

%------------------------------------------
%
%
%--------------------------------------------------------

\section{Face enumeration}
\label{sec-enumeration}

Now we discuss how Theorems \ref{thm:socle-Bu*} and
\ref{thm:intro-level} imply upper and lower bounds on the face
vector of a Buchsbaum* complex $\Delta$.

The face or $f$-vector of a $(d-1)$-dimensional simplicial complex $\Delta$
is  the sequence $f (\Delta) := (f_{-1} (\Delta),  f_{0}
(\Delta),\ldots,f_{d-1} (\Delta))$, where $f_{j} (\Delta)$ is the
number of $j$-dimensional faces of $\Delta$. The same information is
encoded in the $h$-vector $h (\Delta) := (h_{0}
(\Delta),\ldots,h_{d} (\Delta))$, which is defined by
\[
\frac{h_0 (\Delta) + h_1 (\Delta) t + \cdots + h_d (\Delta) t^d}{(1-t)^d}
:=
\sum_{j \ge 0} \dim_K [K[\Delta]]_j t^j =
\sum_{j= -1}^{d-1} \frac{f_j (\Delta) t^{j+1}}{(1-t)^{j+1}}.
\]
More explicitly, this gives
\[
h_j (\Delta) = \sum_{i=0}^j (-1)^{j-i} \binom{d-i}{j-i} f_{i-1} (\Delta) \quad \text{and} \quad
f_{j-1} (\Delta) = \sum_{i=0}^j \binom{d-i}{j-i} h_i (\Delta).
\]
The  $h'$-vector $h' (\Delta) := (h'_{0} (\Delta),\ldots,h'_{d}
(\Delta))$  is defined by
\[
h'_j (\Delta) := h_j (\Delta) + \binom{d}{j} \sum_{i=0}^{j-1} (-1)^{j-i-1} \be_{i-1} (\Delta).
\]
If $\Delta$ is Buchsbaum, then its $h'$-vector is again a Hilbert
function  because, according to  \cite{Sch},
\[
h'_j = \dim_K [K[\Delta]/\bl]_j
\]
if $\bl := \ell_1,\ldots,\ell_d$ is a linear system of parameters of
$K[\Delta]$.

Following \cite{NS-Bu}, we define the $h''$-vector $h'' (\Delta)  :=
(h''_{0} (\Delta),\ldots,h''_{d} (\Delta))$ of $\Delta$ by
\[
h''_j (\Delta) := h'_j (\Delta) - \binom{d}{j} \be_{j-1} (\Delta) =
h_j (\Delta) + \binom{d}{j} \sum_{i=0}^{j} (-1)^{j-i-1} \be_{i-1} (\Delta)
\quad \text{if} \; i < d
\]
and
\[
h''_d (\Delta) := \be_{d-1} (\Delta).
\]
The key for our purposes is that in case
 $\Delta$ is Buchsbaum* $h''(\Delta)$ also is a Hilbert function.

\begin{corollary}
  \label{cor:Hilb-funct-level}
Let $\Delta$ be a $(d-1)$-dimensional Buchsbaum* complex. Using the
notation of Theorem \ref{thm:intro-level} set $\overline{K(\Delta)}
:= (K[\Delta]/\bl)/I$. Then $\overline{K(\Delta)}$ is a level
algebra whose Hilbert function is given by
\[
\dim_K [\overline{K(\Delta)}]_j = h''_j (\Delta).
\]
\end{corollary}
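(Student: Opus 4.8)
The plan is to combine the two structural results that the paper has already set up: the socle description from Corollary \ref{cor:socle-reduction-b*} (which identifies exactly which graded pieces of $\Soc K[\Delta]/\bl$ one is quotienting out by $I$), and the level-ring conclusion from Theorem \ref{thm:level}, which applies since every Buchsbaum* complex is in particular Buchsbaum, and a Buchsbaum* complex with $\beta_{d-1}(\Delta) = 0$ is automatically $2$-CM and the statement must be checked there only as a degenerate case — but in fact one should first dispose of the case $\beta_{d-1}(\Delta) = 0$ separately. So the overall skeleton is: (1) reduce to $\beta_{d-1}(\Delta) \neq 0$; (2) invoke Theorem \ref{thm:intro-level} (itself proved in Section \ref{sec:level}) to get that $\overline{K(\Delta)}$ is level of socle degree $d$ and Cohen-Macaulay type $\beta_{d-1}(\Delta)$; (3) compute the Hilbert function of $\overline{K(\Delta)}$ degree by degree and identify it with $h''(\Delta)$.

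For step (3), the starting point is that $\dim_K[K[\Delta]/\bl]_j = h'_j(\Delta)$ for all $j$, by Schenzel's formula as recalled in Section \ref{sec-enumeration}. Since $\overline{K(\Delta)} = (K[\Delta]/\bl)/I$ with $I = \bigoplus_{j=1}^{d-1}[\Soc K[\Delta]/\bl]_j$, one has
\[
\dim_K[\overline{K(\Delta)}]_j = h'_j(\Delta) - \dim_K [\Soc K[\Delta]/\bl]_j \quad \text{for } 1 \le j \le d-1,
\]
while $\dim_K[\overline{K(\Delta)}]_j = h'_j(\Delta)$ for $j = 0$ and $j = d$ (nothing is removed in those degrees since $I$ is concentrated in degrees $1,\dots,d-1$, and the removed socle has no component in degree $d$ by Corollary \ref{cor:socle-reduction-b*}; for $j=0$ the socle of a standard graded algebra in degree $0$ would force $\dim = 0$, which is excluded). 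By Theorem \ref{thm:socle-Bu*}, $\dim_K[\Soc K[\Delta]/\bl]_j = \binom{d}{j}\be_{j-1}(\Delta)$ for every positive $j \le d-1$. Plugging this in and comparing with the definition $h''_j(\Delta) = h'_j(\Delta) - \binom{d}{j}\be_{j-1}(\Delta)$ gives the desired equality for $1 \le j \le d-1$. For $j = d$ one uses that $h'_d(\Delta) = h_d(\Delta) = \dim_K[K[\Delta]/\bl]_d = \beta_{d-1}(\Delta)$ (this last equality is exactly what Theorem \ref{thm:intro-level} records, and matches $h''_d(\Delta) := \be_{d-1}(\Delta)$), and for $j = 0$ both sides equal $1$. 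Finally for $j > d$ or $j < 0$ both $\overline{K(\Delta)}$ and $h''_j$ vanish.

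I expect the only genuinely delicate point to be the bookkeeping at the top degree $j = d$: one must be sure that no socle is being removed there — i.e. that $[\Soc K[\Delta]/\bl]_d$ is not contained in $I$ — which is exactly the content of Corollary \ref{cor:socle-reduction-b*} (the socle of $\HH^d(K[\Delta])$ sits in degree $0$, hence the degree-$d$ socle piece of $K[\Delta]/\bl$ coming from it is genuinely the socle of the level quotient, not part of $I$), together with the observation that $h'_d(\Delta) = \beta_{d-1}(\Delta) \neq 0$ so that $\overline{K(\Delta)}$ is in fact a nonzero algebra of socle degree exactly $d$. Apart from that, the proof is a direct substitution; I would also remark that when $\beta_{d-1}(\Delta) = 0$, Buchsbaum* forces $\HH^d(K[\Delta]) = 0$, so $\Delta$ is Cohen-Macaulay, $I = 0$, $\overline{K(\Delta)} = K[\Delta]/\bl$ is level (being Cohen-Macaulay artinian) with Hilbert function $h = h' = h''$, and the statement holds trivially.
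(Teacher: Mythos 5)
Your proof takes the same route as the paper, whose proof of this corollary is a one-liner: combine Theorem \ref{thm:socle-Bu*} with Theorem \ref{thm:intro-level}. Your degree-by-degree verification of $\dim_K[\overline{K(\Delta)}]_j = h''_j(\Delta)$ is exactly the intended bookkeeping. Two small corrections. In degree $d$ you write $h'_d(\Delta) = h_d(\Delta)$, which is false in general: by definition $h'_d(\Delta) = h_d(\Delta) + \sum_{i=0}^{d-1}(-1)^{d-i-1}\be_{i-1}(\Delta)$, and the sum need not vanish (it is $\pm$ a partial reduced Euler characteristic). What you actually need is $h'_d(\Delta) = \dim_K[K[\Delta]/\bl]_d = \be_{d-1}(\Delta)$, which does hold: the first equality is Schenzel's formula, and since $I$ lives in degrees $1,\dots,d-1$ we have $[K[\Delta]/\bl]_d = [\overline{K(\Delta)}]_d$, which Theorem \ref{thm:intro-level} identifies as the socle of $\overline{K(\Delta)}$, of dimension $\be_{d-1}(\Delta)$; so simply drop the spurious middle equality. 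Second, your preliminary case split on $\beta_{d-1}(\Delta) = 0$ is unnecessary, and in fact vacuous: for a $(d-1)$-dimensional complex, $\HH^d(K[\Delta])$ is a nonzero artinian module, hence has nonzero socle, and for a Buchsbaum* complex that socle sits in degree $0$ by Proposition 2.8 of \cite{AW} (as used in Corollary \ref{cor:socle-reduction-b*}), so $\be_{d-1}(\Delta) = \dim_K[\HH^d(K[\Delta])]_0 > 0$ automatically. This observation is also what makes the hypothesis $\beta_{d-1}(\Delta)\neq 0$ of Theorem \ref{thm:level} harmless when deducing Theorem \ref{thm:intro-level}, so Theorem \ref{thm:intro-level} can be invoked here with no case analysis.
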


\begin{proof}
This follows by combining Theorem \ref{thm:socle-Bu*} and Theorem
\ref{thm:intro-level}.
\end{proof}

In order to use this information, we recall a result about Hilbert
functions.

\begin{notation} \label{not-bin-expansion}
(i) We always use the following convention for binomial
coefficients: If $a \in \R$ and $j \in \Z$ then
$$
\binom{a}{j} := \left \{ \begin{array}{ll}
\frac{a (a-1) \cdots (a-j+1)}{j!} & \mif j > 0 \\
1 & \mif j = 0 \\
0 & \mif j < 0.
\end{array} \right.
$$

(ii) Let $b > 0$ and $d \ge 0$ be  integers. Then there are uniquely
determined integers $m_s,\ldots,m_{d+1}$ such that $m_{d+1} \ge 0$,
\  $n+d -2 \ge m_d
> m_{d-1} > \ldots
> m_s \geq s \geq 1$, and
$$
b = m_{d+1} \binom{n-1+d}{d} + \binom{m_d}{d} + \binom{m_{d-1}}{d-1} +
\ldots + \binom{m_s}{s}.
$$
This is called the {\em $d$-binomial expansion} of $b$. For any
integer $j$ we set
\[
b\pp{d} :=  m_{d+1} \binom{n+d}{d+1} + \binom{m_d + 1}{d + 1} +
\binom{m_{d-1} + 1}{d}
+ \ldots +
\binom{m_s + 1}{s + 1}.
\]

(iii) If $b = 0$, then we put $b\pp{d}  := 0$ for all $d \in \Z$.
\end{notation}
\medskip

Assume $b >0$. Note that then $s = d+1$ if and only if
$\binom{n-1+d}{d}$ divides $b$. Furthermore, $\binom{m_d}{d} +
\binom{m_{d-1}}{d-1} + \ldots + \binom{m_s}{s}$ is the standard
$d$-binomial representation of $b - m_{d+1} \binom{n-1+d}{d}$.
\smallskip

We are ready to state a generalization of Macaulay's
characterization of Hilbert functions of algebras to modules, which
has been proven by Hulett \cite{Hulett} in characteristic zero and
by Blancafort and  Elias  \cite{BE} in arbitrary characteristic.

\begin{theorem}
  \label{thm-chara-Hilb-funct-modules}
For a numerical function $h: \N_0 \to \N_0$, the following
conditions are equivalent:
\begin{itemize}
  \item[(a)] In non-negative degrees $h$ is the Hilbert function of
  a module over $S = K[x_1,\ldots,x_n]$ that is generated in
  degree zero, i.e., there is a graded  $S$-module $M$ whose
  minimal generators have degree zero such that $h(j) = \dim_K
  [M]_j$ whenever $j \ge 0$.

  \item[(b)] For all integers $j \ge 0$,
\[
h (j+1) \le h(j)\pp{j}.
\]
\end{itemize}
\end{theorem}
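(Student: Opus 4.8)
The statement to be proved is Theorem \ref{thm-chara-Hilb-funct-modules}, the module-theoretic version of Macaulay's theorem (due to Hulett in characteristic zero and Blancafort--Elias in general). Here is how I would approach it.

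\medskip

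\textbf{Reduction to the graded-algebra case via the Rees-type trick.} The plan is to reduce the module statement to the classical theorem of Macaulay for standard graded $K$-algebras. Given a graded $S$-module $M$ generated in degree zero, with $g$ minimal generators, form the trivial (idealization / Nagata) extension $A := S \ltimes M$, or equivalently work with the symmetric-algebra-flavored construction: adjoin new variables $y_1,\ldots,y_g$ of degree zero is \emph{not} allowed since degree-zero variables are not standard graded, so instead one uses the following standard device. Consider the graded ring $R := K[x_1,\ldots,x_n] \oplus M$, a $K$-algebra with $M \cdot M = 0$; this is a (non-standard, because generators of $M$ sit in degree $0$) graded ring, so one must instead shift: replace $M$ by $M(-1)$ so that its generators lie in degree $1$, and set $R := S \ltimes M(-1)$. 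Then $R = K[x_1,\ldots,x_n,T_1,\ldots,T_g]/J$ for suitable relations $J$, it is a standard graded $K$-algebra on $n+g$ variables, and $\dim_K [R]_j = \dim_K [S]_j + \dim_K [M(-1)]_j = \binom{n-1+j}{n-1} + h(j-1)$ for $j \ge 1$, with $[R]_0 = K$. Apply Macaulay's theorem to $R$ in degrees $j$ and $j+1$; subtract off the (known, and Macaulay-extremal) contribution $\binom{n-1+j}{n-1}$ of the polynomial part; what remains, after unwinding the $j$-binomial expansions, is exactly the inequality $h(j+1) \le h(j)\pp{j}$ with the $\binom{n-1+d}{d}$-normalized expansion of Notation \ref{not-bin-expansion}. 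The bookkeeping of how Macaulay's bound on $R$ decomposes into the polynomial part plus the $h$-part is precisely where the peculiar shape of Notation \ref{not-bin-expansion}(ii) (the leading term $m_{d+1}\binom{n-1+d}{d}$) comes from, so the two must be matched carefully.

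\medskip

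\textbf{Proof of necessity (a) $\Rightarrow$ (b).} With $R$ as above, $\dim_K[R]_j$ is a Macaulay function, so $\dim_K[R]_{j+1} \le (\dim_K[R]_j)^{\langle j\rangle}$ in the classical sense. Writing $\dim_K[R]_j = \binom{n-1+j}{n-1} + h(j-1)$ and using the additivity of the classical Macaulay operation on this particular ``polynomial ring plus remainder'' decomposition (the polynomial-ring Hilbert function is the generic initial-segment function, hence lex-extremal), one extracts $h(j) \le h(j-1)^{\langle j-1\rangle}$ in the normalized sense of Notation \ref{not-bin-expansion}. Reindexing gives (b).

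\medskip

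\textbf{Proof of sufficiency (b) $\Rightarrow$ (a), and the main obstacle.} For the converse one must actually construct a module realizing $h$. The cleanest route is to build it as a quotient of a free module $F = S^{g}$ (with $g = h(0)$) by a monomial submodule whose graded pieces are ``lex segments'' in each free summand — that is, imitate the lex-ideal construction inside $F$, stacking lex segments of $S$ across the $g$ copies in a way that realizes the prescribed ranks degree by degree, and then invoke the fact that such a lexsegment-type submodule is genuinely a submodule (i.e., closed under multiplication by $S$) precisely when the numerical condition in (b) holds. Equivalently, one transports the lex ideal in $R = S[T_1,\ldots,T_g]$ back down: take a lex ideal $L \subseteq R$ with $\dim_K[R/L]_j = \binom{n-1+j}{n-1} + h(j-1)$, check that $L \supseteq (T_iT_j)$ and $L \cap S = 0$ so that $R/L = S \ltimes N(-1)$ for some module $N$, and read off $N$. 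The hard part is verifying that the numerical recursion (b) is exactly strong enough to keep these lexsegments closed under the $S$-action at every step — this is the module analogue of the Macaulay existence argument and requires the careful combinatorial lemma that the $\langle d\rangle$-operation of Notation \ref{not-bin-expansion} is monotone and compatible with adding the polynomial-ring layer. I would isolate that monotonicity/compatibility as a separate combinatorial lemma and then feed it into an induction on the degree to produce the submodule; modulo that lemma, both implications follow from the classical Macaulay theorem applied to $R$.
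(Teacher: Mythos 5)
The paper does not give a self-contained proof of this theorem: it is simply quoted from Blancafort--Elias \cite{BE}, Theorem~3.2 (with the characteristic-zero case due to Hulett \cite{Hulett}). Your proposal, by contrast, tries to actually prove it, so the right comparison is between your argument and the literature result it cites. Unfortunately the central reduction step in your argument fails.

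Your plan is to pass from $M$ to the idealization $R := S\ltimes M(-1)$, a standard graded quotient of $K[x_1,\dots,x_n,T_1,\dots,T_g]$ with $\dim_K[R]_j = \binom{n-1+j}{j} + h(j-1)$, apply the classical Macaulay bound to $R$, and claim that ``subtracting the polynomial part'' yields precisely $h(j+1)\le h(j)\pp{j}$ in the normalized sense of Notation~\ref{not-bin-expansion}. The implicit identity you need is
\[
\Bigl(\tbinom{n-1+j}{j}+h(j-1)\Bigr)^{\langle j\rangle}_{\mathrm{classical}}
\;=\;
\tbinom{n+j}{j+1}+\bigl(h(j-1)\bigr)\pp{j-1},
\]
and this is false. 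Take $n=2$, $M=S^2$, so $h(j)=2(j+1)$ and $g=2$. Then $R=K[x,y,T_1,T_2]/(T_1,T_2)^2$ has Hilbert function $1,4,7,10,\dots$. Classical Macaulay at $j=2$ gives $7^{\langle 2\rangle}=\binom{5}{3}+\binom{2}{2}=11$, so the $R$-inequality in degree $3$ reads $4+h(2)\le 11$, i.e.\ $h(2)\le 7$. On the other hand, condition~(b) with the paper's normalized operation gives $h(1)\pp{1}=2\binom{3}{2}=6$, hence $h(2)\le 6$, and indeed $h(2)=6$ with equality. So the classical Macaulay bound on $R$ is \emph{strictly weaker} than the inequality you are trying to prove; applying it and ``unwinding'' cannot yield~(b). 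The conceptual reason is that classical Macaulay is sharp on arbitrary lex ideals of $K[x,T]$, whereas the module-theoretic bound is sharp only on the smaller class of lex submodules of $S^g$, equivalently on lex ideals $L$ satisfying $L\supseteq (T_iT_j)$ and $L\cap S=0$. The idealization forgets this extra structure, so the extremal objects are not the same and the numerics do not match.

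This also undercuts the sufficiency half: the lex ideal of $K[x,T]$ with the Hilbert function you prescribe need not contain $(T_iT_j)$ or avoid $S$, so it does not in general descend to an $S$-module. The correct route (and the one Hulett and Blancafort--Elias actually take) is to work directly with lex submodules of $F=S^g$: one defines the lex submodule degree by degree, proves a module-theoretic ``Macaulay expansion lemma'' computing the growth of such a submodule in terms of the $n$-normalized expansion of Notation~\ref{not-bin-expansion}, and then shows that lex submodules are extremal among all submodules of $F$. The normalization by $\binom{n-1+d}{d}$ arises there because one may always peel off entire free summands $S(0)$, not because of any compatibility with the classical operation under idealization. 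If you want a proof from scratch, that is the lemma you should isolate and prove; the idealization shortcut does not close the gap.
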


\begin{proof}
  This follows from  \cite{BE}, Theorem 3.2.
\end{proof}

The following result provides restrictions on the face vectors of
Buchsbaum* complexes with given Betti numbers. The upper bound strengthens
\cite{NS-Bu}, Theorem 4.3, for Buchsbaum complexes in the case of
Buchsbaum* complexes.

\begin{theorem}
  \label{thm-upper-bound-h-Buchs*}
Let $\Delta$ be a $(d-1)$-dimensional Buchsbaum* complex on $n$
vertices. Then its $h'$-vector $(h'_0,\ldots,h'_d)$ and $h''$-vector $(h''_0,\ldots,h''_d)$ satisfy
$h'= h''_0  = 1$, $h'_1 = h''_1  = n-d$,  $h''_{d}  = \be_{d-1} (\Delta)$, and
\begin{itemize}
  \item[(a)] \[
h'_{j+1}  \le \min \left \{(h''_{j})\pp{j}, (h''_{j+2})\pp{d-j-2} + \be_j (\Delta) \binom{d}{j+1} \right \}
\quad \text{if } \; 1 \le j \le d-2;
\]
  \item[(b)] \[
  h''_{d-j} \ge \frac{h''_j}{\be_{d-1} (\Delta)} \quad \text{if
} \; 1 \le j \le d-1.
\]
\end{itemize}

\end{theorem}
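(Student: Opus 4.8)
The plan is to derive the stated numerical inequalities by feeding the structural information of Corollary \ref{cor:Hilb-funct-level} into the module-theoretic Macaulay bound of Theorem \ref{thm-chara-Hilb-funct-modules}, applied in two different ways, and then combining this with Gorenstein-type duality for the level ring. First I would record the elementary normalizations: $h'_0 = h''_0 = \dim_K[\overline{K(\Delta)}]_0 = 1$ since $\overline{K(\Delta)}$ is a standard graded $K$-algebra; $h'_1 = h''_1 = n - d$ because the socle of $K[\Delta]/\bl$ vanishes in degree $1$ (as $d \geq 2$ in the nontrivial range, so $[\Soc K[\Delta]/\bl]_1 = \binom{d}{1}\be_0(\Delta)$ only when $\Delta$ is disconnected, but in degree $1$ the quotient $K[\Delta]/\bl$ has dimension $h'_1 = n-d$ and factoring out socle degree $1$ would change this—so one must be slightly careful and instead observe $h''_1 = h'_1 - \binom{d}{1}\be_0$, and the claim $h''_1 = n-d$ presumes $\be_0 = 0$, i.e. $\Delta$ connected; I would state the connectedness hypothesis or note $\be_0 = 0$ follows from $d \geq 2$ and purity only when $\Delta$ is connected, so likely the intended reading uses $h''_1$ with the understanding that $\be_{-1} = 0$ always and the $\be_0$ correction is absorbed—I would clarify this point in the writeup); and $h''_d = \be_{d-1}(\Delta)$ directly from the definition of the $h''$-vector.

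For part (a), the first bound $h'_{j+1} \leq (h''_j)\pp{j}$ comes from applying Theorem \ref{thm-chara-Hilb-funct-modules}(b) to the module $\overline{K(\Delta)}$ itself: it is generated in degree zero with Hilbert function $j \mapsto h''_j(\Delta)$ by Corollary \ref{cor:Hilb-funct-level}, so $h''_{j+1} \leq (h''_j)\pp{j}$, and then I use $h'_{j+1} \leq h''_{j+1} + \binom{d}{j+1}\be_j(\Delta)$—wait, the relation is $h''_{j+1} = h'_{j+1} - \binom{d}{j+1}\be_j$, so in fact $h'_{j+1} = h''_{j+1} + \binom{d}{j+1}\be_j \leq (h''_j)\pp{j} + \binom{d}{j+1}\be_j$; I should double-check whether the theorem as stated drops the Betti correction on the first bound, which would require $\be_j = 0$ or a different module—more plausibly the first bound should read $h''_{j+1} \le (h''_j)\pp{j}$ giving $h'_{j+1}$ after adding back the Betti term, so I would reconcile the statement with this. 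The second bound in (a) uses Gorenstein duality: since $\overline{K(\Delta)}$ is level of socle degree $d$ and type $\be_{d-1}$, there is a surjection onto it from a free module over an Artinian Gorenstein ring, or dually its Matlis dual is generated in degree $-d$; concretely, the "reverse" inequality for level algebras says that $h''_{j+2}$ controls $h''$ from the top down, yielding $h''_{j+1} \cdot \text{(something)} \le \dots$—I would invoke the standard fact that for a level algebra $A$ of type $t$ and socle degree $e$, the numerical function $j \mapsto \frac{1}{?}$ satisfies a Macaulay bound read from the socle end, so that $h''_{j+1} \le (h''_{j+2})\pp{d-j-2}\cdot$ combined with the $\be_j\binom{d}{j+1}$ term gives the claimed $h'_{j+1}$ bound after adding the Betti correction.

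For part (b), the inequality $h''_{d-j} \geq h''_j/\be_{d-1}(\Delta)$ is the quantitative form of the fact that a level algebra of type $t$ is a quotient of a free module of rank $t$ over an Artinian Gorenstein ring with symmetric $h$-vector; equivalently, the canonical module $\omega_{\overline{K(\Delta)}}$ is generated in degree $-d$ by $\be_{d-1}$ elements, so there is a surjection $\be_{d-1}\cdot\overline{K(\Delta)}(d) \twoheadrightarrow \omega_{\overline{K(\Delta)}}$; taking Hilbert functions and using $\dim_K[\omega]_{-k} = \dim_K[\overline{K(\Delta)}]_k = h''_k$ (graded local duality) gives $h''_{d-j} = \dim_K[\omega]_{-(d-j)} = \dim_K[\omega]_{j - d}$—I need to index carefully—yielding $\be_{d-1}\cdot h''_{?} \ge h''_{?}$ which rearranges to the stated bound. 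The main obstacle I anticipate is not any single deep step but rather keeping the bookkeeping straight: matching the shifts in the level/canonical-module duality, deciding exactly where the Betti corrections $\binom{d}{j+1}\be_j$ enter when passing between $h'$ and $h''$, and confirming that Theorem \ref{thm-chara-Hilb-funct-modules} applies to the appropriate module in each of the two directions (once to $\overline{K(\Delta)}$ directly, once to its canonical module or Matlis dual, which is generated in a single degree precisely because the ring is level). I would organize the writeup as: (i) normalizations; (ii) upper bound from Macaulay applied to $\overline{K(\Delta)}$; (iii) the duality statement for level algebras, stated as a lemma if not readily citable; (iv) upper bound from Macaulay applied to the dual, giving the second half of (a); (v) the ratio bound (b) as a direct Hilbert-function comparison from the surjection in (iii).
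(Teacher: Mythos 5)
Your overall plan is sound for the second half of part (a) and for part (b), and those match the paper's argument: the second bound in (a) comes from applying Theorem~\ref{thm-chara-Hilb-funct-modules} to the canonical module $\omega_{\overline{K(\Delta)}}$, which is generated in degree $-d$ precisely because $\overline{K(\Delta)}$ is level, with $\dim_K[\omega_{\overline{K(\Delta)}}]_{-j} = h''_j$; and (b) is the standard level-algebra fact you describe (the paper cites Stanley's Theorem~2 in \cite{St-CM-complexes}, but the surjection $\be_{d-1}(\Delta)\cdot\overline{K(\Delta)}(d)\twoheadrightarrow\omega_{\overline{K(\Delta)}}$ argument you sketch is the content of it).

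However, there is a genuine gap in your treatment of the first bound $h'_{j+1}\le (h''_j)\pp{j}$. You apply Theorem~\ref{thm-chara-Hilb-funct-modules} to $\overline{K(\Delta)}$ itself, whose Hilbert function is $h''$. That gives $h''_{j+1}\le (h''_j)\pp{j}$, hence only $h'_{j+1}\le (h''_j)\pp{j} + \binom{d}{j+1}\be_j(\Delta)$ after adding back the correction term -- a strictly weaker inequality. You notice this mismatch and suspect the stated bound might be wrong, but it is not. The paper instead applies Macaulay's bound to the algebra $(K[\Delta]/\bl)/[I]_j S$, the quotient by the ideal generated by only the degree-$j$ part of the socle summand $I$. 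Since $[I]_j$ consists of socle elements, the ideal $[I]_j S$ is concentrated in degree $j$; therefore this quotient has Hilbert function equal to $h''_j$ in degree $j$ but still $h'_{j+1}$ (not $h''_{j+1}$) in degree $j+1$. Applying Theorem~\ref{thm-chara-Hilb-funct-modules} to that algebra yields $h'_{j+1}\le (h''_j)\pp{j}$ directly, with no Betti correction. This is the device from \cite{NS-Bu}, Theorem~4.3, which the paper cites, and it is the idea missing from your proposal. Your worry about $h''_1 = n-d$ forcing $\be_0(\Delta)=0$ is a reasonable bookkeeping remark but a side issue; the substantive missing step is the choice of the intermediate quotient $(K[\Delta]/\bl)/[I]_j S$.
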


\begin{proof}
(a) We use the notation of Corollary \ref{cor:Hilb-funct-level}.
The inequality $h'_{j+1}  \le (h''_{j})\pp{j}$ follows as in \cite{NS-Bu}, Theorem 4.3, by applying Theorem \ref{thm-chara-Hilb-funct-modules} to the algebra $(K[\Delta]/\bl)/[I]_j S$.

The canonical module
$\omega_{\overline{K(\Delta)}}$ of $\overline{K(\Delta)} = (K[\Delta]/\bl)/I$ is generated in degree $-d$
because $\overline{K(\Delta)}$ is level. Notice that, for all
integers $j$,
\[
\dim_K [\omega_{\overline{K(\Delta)}}]_{-j} =
\dim_K [\overline{K(\Delta)}]_j =
h''_j.
\]
Hence Theorem \ref{thm-chara-Hilb-funct-modules} provides
\[
h'_{j+1} - \be_j (\Delta) \binom{d}{j+1} =  h''_{j+1}  \le (h''_{j+2})\pp{d-j-2},
\]
which completes the proof of (a).

(b) is a consequence of Theorem 2 in \cite{St-CM-complexes}.
\end{proof}

\begin{remark}
  (i) It is a wide open problem to characterize the
  Hilbert functions of artinian level algebras. A systematic
  study of them was begun in \cite{GHMS}.

  (ii) The bound in Part (b) is never attained. This will be shown in
  the forthcoming paper \cite{BMMNZ}.

  (iii) According to S\"oderberg  (\cite{Soed}, Theorem 4.7), the
  $h''$-vector also has to satisfy the determinantal condition
  \[
     \left | \begin{array}{ccc}
     h''_{j-1} & h''_{j}  & h''_{j+1} \\
     r_{j-1} & r_{j} & r_{j+1} \\
     r_{d-j+1} & r_{d-j} & r_{d-j-1}
     \end{array}
     \right | \ge 0
  \]
  for all integers $j$, where $r_j := \binom{n-1+j}{j}$.
  S\"oderberg's result depends on a joint conjecture with Boij in \cite{BS} that
  has been proven by Eisenbud and Schreyer  \cite{ES}.

  Notice that S\"oderberg's condition characterizes the $h$-vectors
  of artinian level algebras up to multiplication by a rational
  number.
\end{remark}

%------------------------------------------------
% The Acknowledgement
%------------------------------------------------
\begin{acknowledgement}
    The author would like to thank Isabella Novik, Christos
    Athana\-siadis,
   and an anonymous referee  for helpful comments.
\end{acknowledgement}

%------------------------------------------
%
%
%--------------------------------------------------------

\end{document}